\newcommand{\E}{\mathbb E}
\newcommand{\R}{\mathbb{R}}
\newcommand{\N}{\mathbb{N}}
\renewcommand{\P}{\mathbb{P}}
\newcommand{\simn}{\underset{n \to \infty}{\sim}}
\newcommand{\cP}{\mathcal{P}}
\newcommand{\cF}{\mathcal{F}}
\newcommand{\aff}{\mathop{\mathrm{aff}}\nolimits}
\newcommand{\Vol}{\mathop{\mathrm{Vol}}\nolimits}
\newcommand{\spec}{\mathop{\mathrm{Spec}}\nolimits}
\newcommand{\Nor}{\mathop{\mathrm{Nor}}\nolimits}
\newcommand{\sgn}{\mathop{\mathrm{sgn}}\nolimits}
\newcommand{\conv}{\mathop{\mathrm{conv}}\nolimits}
\newcommand{\zon}{\mathop{\mathrm{Zon}}\nolimits}
\newcommand{\eps}{\varepsilon}
\newcommand{\eqdistr}{\stackrel{d}{=}}
\newcommand{\toas}{\overset{a.s.}{\underset{n\to\infty}\longrightarrow}}
\newcommand{\bsl}{\backslash}
\newcommand{\ind}{\mathbbm{1}}
\newcommand{\dd}{{\rm d}}
\newcommand{\eee}{{\rm e}}
\theoremstyle{plain}
\newtheorem{theorem}{Theorem}[section]
\newtheorem{lemma}[theorem]{Lemma}
\newtheorem{proposition}[theorem]{Proposition}
\theoremstyle{definition}
\newtheorem{example}[theorem]{Example}
\theoremstyle{remark}
\newtheorem{remark}[theorem]{Remark}
\begin{document}

\author{Zakhar Kabluchko}
\address{Zakhar Kabluchko: Institut f\"ur Mathematische Stochastik,
Westf\"alische Wilhelms-Universit\"at M\"unster,
Orl\'eans--Ring 10,
48149 M\"unster, Germany}
\email{zakhar.kabluchko@uni-muenster.de}

\author{Dmitry Zaporozhets}
\address{Dmitry Zaporozhets: St.\ Petersburg Department of Steklov Mathematical Institute,
Fontanka~27,
191011 St.\ Petersburg,
Russia}
\email{zap1979@gmail.com}

\title[Volume of the Gaussian polytope]{Expected volumes of Gaussian polytopes, external angles,  and multiple order statistics}
\keywords{Gaussian polytope, symmetric Gaussian polytope, expected volume, regular simplex, regular crosspolytope, intrinsic volumes, external angles, asymptotics, order statistics, extreme-value theory, Burgers festoon}
\subjclass[2010]{Primary, 60D05; secondary, 52A22, 60G15, 52A23, 60G70, 51M20}
\thanks{}
\begin{abstract}
Let $X_1,\ldots,X_n$ be a standard normal sample in $\R^d$.
We compute exactly the expected volume of the Gaussian polytope $\mathrm{conv}\, [X_1,\ldots,X_n]$, the symmetric Gaussian polytope $\mathrm{conv}\, [\pm X_1,\ldots,\pm X_n]$, and the Gaussian zonotope $[0,X_1]+\ldots+[0,X_n]$ by exploiting their connection to the regular simplex, the regular crosspolytope, and the cube with the aid of Tsirelson's formula. The expected volumes of these random polytopes are given by essentially the same expressions as the intrinsic volumes and external angles of the regular polytopes. For all these quantities, we obtain asymptotic formulae which are more precise than the results which were known before.
More generally, we determine the expected volumes of some heteroscedastic random polytopes including
$
\mathrm{conv}\,[l_1X_1,\ldots,l_nX_n]
$
and
$
\mathrm{conv}\, [\pm l_1 X_1,\ldots, \pm l_n X_n],
$
where $l_1,\ldots,l_n\geq 0$ are parameters, and the intrinsic volumes of the corresponding deterministic polytopes. Finally, we relate the $k$-th intrinsic volume of the regular simplex $S^{n-1}$  to the expected maximum of independent standard Gaussian random variables $\xi_1,\ldots,\xi_n$ given that the maximum has multiplicity $k$. Namely, we show that
$$
V_k(S^{n-1})
= \frac {(2\pi)^{\frac k2}} {k!}   \cdot \lim_{\eps\downarrow 0} \eps^{1-k} \E [\max\{\xi_1,\ldots,\xi_n\} \ind_{\{\xi_{(n)} - \xi_{(n-k+1)}\leq \eps\}}],
$$
where $\xi_{(1)} \leq \ldots \leq \xi_{(n)}$ denote the order statistics. A similar result holds for the crosspolytope if we replace $\xi_1,\ldots,\xi_n$ by their absolute values.
\end{abstract}

\maketitle

\section{Introduction}\label{sec:introduction}
\subsection{Gaussian polytopes}
Let $X_1,\ldots,X_n$ be random points sampled independently from the standard normal distribution on $\R^d$. The random polytopes
$$
\cP_{n,d} := \conv [X_1,\ldots,X_n]\;\;\; \text{ and } \;\;\; \cP_{n,d}^\pm := \conv [\pm X_1,\ldots,\pm X_n],
$$
where $\conv [\ldots]$ denotes the convex hull,
are called the \textit{Gaussian polytope} and the \textit{symmetric Gaussian polytope}, respectively.

We shall be interested in the expected volumes of these polytopes. Let $\kappa_d = \pi^{\frac d2}/ \Gamma(\frac d2+1)$ be the volume of the $d$-dimensional unit ball $\mathbb B_d$. We write $\varphi(t)$ and $\Phi(t)$ for the standard normal density and distribution function:
\begin{equation}\label{eq:def_phi}
\varphi(t) = \frac 1 {\sqrt{2\pi}}\eee^{-t^2/2},
\quad
\Phi(t)=\int_{-\infty}^t \varphi(s)\dd s.
\end{equation}
\begin{theorem}\label{theo:exp_polytopes}
The expected volumes of $\cP_{n,d}$ and $\cP_{n,d}^\pm$ are given by
\begin{align}
\E \Vol_d(\cP_{n,d})
&=
 \frac{\kappa_d \, n!}{d!(n-d-1)!}\int_{-\infty}^{\infty} \Phi^{n-d-1}(t) \varphi^{d+1}(t)\dd t,
 \; &1 \leq d < n&, \label{eq:exp_vol_P_n_d}\\
\E \Vol_d(\cP_{n,d}^\pm)
&=
 \frac{\kappa_d \, n!}{d!(n-d-1)!} \int_{0}^{\infty} (2\Phi(t)-1)^{n-d-1} (2\varphi(t))^{d+1}\dd t,
  \; &1 \leq d \leq  n&.
\label{eq:exp_vol_P_n_d_symm}
\end{align}
\end{theorem}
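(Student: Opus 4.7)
The plan is to write each random polytope as the linear image of a deterministic convex body under the $d \times n$ Gaussian matrix $A := [X_1, \ldots, X_n]$ (with i.i.d.\ $N(0,1)$ entries), and then to invoke Tsirelson's formula, which expresses the expected volume of such an image in terms of an intrinsic volume. Setting $S^{n-1} := \conv[e_1, \ldots, e_n]$ (the standard simplex) and $C^n := \conv[\pm e_1, \ldots, \pm e_n]$ (the standard crosspolytope) in $\R^n$, one has the pathwise identities $\cP_{n,d} = A(S^{n-1})$ and $\cP_{n,d}^\pm = A(C^n)$. Tsirelson's formula then reads: for any convex body $K \subset \R^n$,
$$
\E \Vol_d(A(K)) = \frac{d!\,\kappa_d}{(2\pi)^{d/2}}\, V_d(K),
$$
where $V_d$ is the $d$-th intrinsic volume. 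This identity can be derived from the QR-type factorization $A \eqdistr R Q$, with $Q$ uniform on the Stiefel manifold $\{Q \in \R^{d \times n}: QQ^T = I_d\}$ and $R$ an independent $d \times d$ lower-triangular matrix with $R_{ii}\sim\chi_{n-i+1}$, combined with Cauchy's formula $\E\Vol_d(P_L K) = \kappa_d\kappa_{n-d}/(\binom{n}{d}\kappa_n)\cdot V_d(K)$ for the mean projection volume; the Legendre duplication formula collapses the $n$-dependence of the multiplicative constant.

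The next step is to compute $V_d(S^{n-1})$ and $V_d(C^n)$ via the face decomposition $V_d(P) = \sum_F \Vol_d(F)\,\gamma(F,P)$, the sum taken over all $d$-faces. Both $S^{n-1}$ and $C^n$ act transitively on their $d$-faces, so this reduces to counting faces ($\binom{n}{d+1}$ for $S^{n-1}$, $2^{d+1}\binom{n}{d+1}$ for $C^n$, reflecting the $\pm$ choices at each vertex), one face volume — each $d$-face is a regular $d$-simplex of edge length $\sqrt{2}$ with $\Vol_d = \sqrt{d+1}/d!$ — and a single external angle.

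The external angles are Gaussian integrals, and this is where the integrals appearing in the theorem emerge. For the face $F = \conv[e_1, \ldots, e_{d+1}]$ of $S^{n-1}$, the normal cone inside the direction of $\aff S^{n-1}$ is $\{u: u_1 = \cdots = u_{d+1} =: s,\ u_j \leq s\ \text{for}\ j > d+1\}\cap\{\sum u_i = 0\}$. Taking a standard Gaussian $g\in\R^n$ and projecting onto the ambient $(n-d-1)$-dimensional subspace of this cone, the cone membership reduces to $\{g_j \leq \bar g_+\text{ for all }j>d+1\}$, where $\bar g_+ := \frac{1}{d+1}\sum_{i\le d+1}g_i \sim N(0, 1/(d+1))$ is independent of $g_{d+2},\ldots,g_n$. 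Conditioning on $\bar g_+ = s$ and using the identity $\sqrt{d+1}\,\varphi(\sqrt{d+1}\,s) = \sqrt{d+1}\,(2\pi)^{d/2}\,\varphi(s)^{d+1}$ gives
$$
\gamma_{n,d} = \sqrt{d+1}\,(2\pi)^{d/2}\int_{-\infty}^\infty \Phi(t)^{n-d-1}\varphi(t)^{d+1}\,dt.
$$
The same computation for $C^n$, where the cone condition becomes $|u_j| \leq s$ and $s > 0$ (because the maximum of $\langle u,\cdot\rangle$ on $C^n$ equals $\max_i|u_i|$), produces $\gamma^\pm_{n,d} = \sqrt{d+1}\,(2\pi)^{d/2}\int_0^\infty(2\Phi(t)-1)^{n-d-1}\varphi(t)^{d+1}\,dt$.

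Substituting into Tsirelson's formula and collecting constants — the two $\sqrt{d+1}$'s multiply to $d+1$, the $(2\pi)^{d/2}$ cancels, the $d!$ cancels the $1/d!$ in the face volume, and $\binom{n}{d+1}(d+1) = n!/[d!(n-d-1)!]$ produces the combinatorial prefactor — yields the stated formulas. The main technical obstacle is establishing Tsirelson's formula with the precise constant $d!\kappa_d/(2\pi)^{d/2}$; once that is in place, the remainder is a direct if careful bookkeeping of the factors coming from the face count, face volume, external angle, and Tsirelson's constant.
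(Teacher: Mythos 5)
Your proposal is correct and follows essentially the same route as the paper: identify $\cP_{n,d}$ and $\cP_{n,d}^\pm$ with the Tsirelson spectra (i.e.\ Gaussian linear images) of the regular simplex $S^{n-1}$ and the crosspolytope $C^n$, apply Tsirelson's formula, and evaluate $V_d$ through the face decomposition with the external angles \eqref{eq:ext_angle_simplex} and \eqref{eq:ext_angle_cross_betke} --- the only difference being that you supply (correct) derivations of the external angles and of Tsirelson's constant, which the paper cites from Ruben/Hadwiger, Betke--Henk and Tsirelson. The one loose end is the boundary case $d=n$ in \eqref{eq:exp_vol_P_n_d_symm}, where the face decomposition does not apply and the paper instead uses $V_n(C^n)=2^n/n!$ directly.
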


Formula~\eqref{eq:exp_vol_P_n_d} was obtained by Efron~\cite{efron} for $d=2$ and was stated on the last page of~\cite{efron} for general $d$, whereas formula~\eqref{eq:exp_vol_P_n_d_symm} seems to be new.    Both formulae~\eqref{eq:exp_vol_P_n_d} and~\eqref{eq:exp_vol_P_n_d_symm} will be derived in Sections~\ref{subsec:simplex_tsirelson} and~\ref{subsec:cross_tsirelson}, below.  Additionally, for the \textit{Gaussian polytope with $0$} that is defined by $\cP_{n,d}^{(0)} := \conv [0, X_1,\ldots,X_{n}]$, we shall prove that
\begin{equation}\label{eq:exp_vol_P_n_d_zero}
\E \Vol_d (\cP_{n,d}^{(0)})
=
\frac {\binom nd} {2^{n- \frac d2} \Gamma(\frac d2+1) } + \frac{\kappa_d n!}{d!(n-d-1)!} \int_0^{\infty}  \Phi^{n-d-1}(t)\varphi^{d+1}(t)\dd t
\end{equation}
for $1\leq d\leq n$. More generally, we shall derive exact formulae for the expected volumes of the ``heteroscedastic'' polytopes $
\conv[l_1X_1,\ldots,l_nX_n]
$
and
$
\conv [l_1^+ X_1, -l_1^- X_1, \ldots, l_n^+ X_n, -l_n^- X_n],
$
where $l_1,\ldots,l_n$ (respectively, $l_1^+, l_1^-, \ldots, l_n^+, l_n^-$) are non-negative parameters; see Section~\ref{sec:non_regular}.

Early results on Gaussian polytopes are due to R\'enyi and Sulanke~\cite[\S4]{renyi_sulanke1},  Raynaud~\cite{raynaud} and Geffroy~\cite{geffroy}. In the case $n=d+1$, the Gaussian polytope $\cP_{d+1,d}$ becomes a simplex  and the distribution of its volume was characterized explicitly by Miles~\cite{miles}. For explicit computation of other functionals of Gaussian polytopes (for example, the expected number of faces and absorption probabilities),  see~\cite{raynaud,AS92,baryshnikov_vitale,HMR04,hug_reitzner,kabluchko_zaporozhets_gauss_polytope}. Central limit theorem for the volume  and the number of faces of $\cP_{n,d}$ was established by Barany and Vu~\cite{barany_vu}. The asymptotic variances of these quantities and the scaling limit of the boundary were characterized by Calka and Yukich~\cite{calka_yukich}. Convex hulls of points chosen uniformly in a ball or on the sphere were studied by Kingman~\cite{kingman} (who computed the expected volume in the case of the simplex), Raynaud~\cite{raynaud} (who computed the asymptotics of the expected number of facets), and Affentranger~\cite{affentranger_ball} (who computed the expected volume for general $n$ and $d$).
Convex hulls of i.i.d.\ samples from more general beta-type spherically symmetric distributions were studied by Miles~\cite{miles} (in the case of the simplex) and by~\citet{affentranger} (who derived the asymptotics of the volume for general $n$ and $d$). There are also many references on convex hulls of points chosen uniformly in a convex body, but we shall focus on the Gaussian case here.

\subsection{Regular polytopes}
The above formulae~\eqref{eq:exp_vol_P_n_d} and~\eqref{eq:exp_vol_P_n_d_symm} look very similar to the formulae for the external angles and intrinsic volumes of the regular simplex and the regular crosspolytope, respectively. We shall recall the definition of the external angles and intrinsic volumes in Section~\ref{subsec:intrinsic_and external}, below. Since for regular polytopes these quantities differ by multiplicative constants, we state only the formulae for the external angles.

Let $e_1,\ldots,e_n$ be the standard orthonormal basis in $\R^n$. The  $(n-1)$-dimensional \textit{regular simplex} $S^{n-1}$ and the $n$-dimensional \textit{regular crosspolytope} $C^n$ are defined by
$$
S^{n-1} = \conv [e_1,\ldots,e_n] \;\;\; \text{ and } \;\;\; C^{n} = \conv [\pm e_1,\ldots, \pm e_n],
$$
respectively. Then, the external angles of $S^{n-1}$ and $C^{n}$ at any $k$-dimensional face $F_k$ are given by
\begin{align}
\gamma(F_k, S^{n-1})
&=
(2\pi)^{k/2} \sqrt{k+1} \int_{-\infty}^{\infty} \Phi^{n-k-1}(t) \varphi^{k+1}(t)\dd t,
&0&\leq k < n,
\label{eq:ext_angle_simplex}\\
\gamma(F_k, C^{n})
&=
(2\pi)^{k/2}  \sqrt{k+1} \int_{0}^{\infty} (2\Phi(t)-1)^{n-k-1} \varphi^{k+1}(t)\dd t,
&0&\leq k < n.\label{eq:ext_angle_cross_betke}
\end{align}
Formula~\eqref{eq:ext_angle_simplex} can be found in the work of Ruben~\cite{ruben,ruben_moments}. A more transparent proof was provided by~\citet{hadwiger}. It is also possible to relate the external angles of regular simplices to the volumes of regular spherical simplices which were computed in the book of~\citet[Satz 3 on p.~283]{boehm_hertel_book}. Formula~\eqref{eq:ext_angle_cross_betke} was obtained by~\citet[Lemma~2.1]{betke_henk}.

Similarly to the above, the counterpart of~\eqref{eq:exp_vol_P_n_d_zero} is a formula  for the external angle of the rectangular simplex $S^n_0:= \conv[0,e_1,\ldots,e_n]$ at any $k$-dimensional face $F_k$ not containing $0$, for example $F_k = \conv[e_1,\ldots,e_{k+1}]$:
\begin{equation}\label{eq:ext_angle_simplex_zero}
\gamma(F_k, S^{n}_0)
=
(2\pi)^{k/2}  \sqrt{k+1}   \int_0^{\infty}  \Phi^{n-k-1}(t)\varphi^{k+1}(t)\dd t.
\end{equation}
This formula was stated in~\cite[Theorem~2.2]{betke_henk} in terms of the intrinsic volumes.

The methods by which~\eqref{eq:exp_vol_P_n_d} and~\eqref{eq:ext_angle_simplex} were derived in the literature do not explain the similarity between these formulae. In Section~\ref{sec:sudakov_tsirelson} we shall establish an equivalence between~\eqref{eq:exp_vol_P_n_d}, \eqref{eq:exp_vol_P_n_d_symm}, \eqref{eq:exp_vol_P_n_d_zero} and \eqref{eq:ext_angle_simplex}, \eqref{eq:ext_angle_cross_betke}, \eqref{eq:ext_angle_simplex_zero} via a theorem due to Tsirelson~\cite{tsirelson2}.

\subsection{Extreme values and asymptotics}
Let $\xi_1,\ldots,\xi_n$ be independent one-dimensional standard Gaussian variables.  In dimension $d=1$, the Gaussian polytopes are just intervals, namely
\begin{equation}\label{eq:polytopes_one_dim}
\cP_{n,1} \eqdistr [\xi_{(1)},\xi_{(n)}],
\quad
\cP_{n,1}^\pm \eqdistr [-|\xi|_{(n)},|\xi|_{(n)}],
\end{equation}
where $\xi_{(1)} = \min\{\xi_1,\ldots,\xi_n\}$, $\xi_{(n)} = \max\{\xi_1,\ldots,\xi_n\}$ and $|\xi|_{(n)} = \max\{|\xi_1|,\ldots,|\xi_n|\}$. For arbitrary $d\in\N$, one can view $\cP_{n,d}$ as a generalized sample range, whereas the vertices of $\cP_{n,d}$ can be thought of as the generalized sample extremes.
Historically, this connection to extreme-value theory was one of the motivations for studying random convex hulls; see~\cite{MCR10} for a recent work in this direction. It follows from~\eqref{eq:polytopes_one_dim} that for $d=1$,
$$
\E \Vol_1 (\cP_{n,1}) = 2 \E \max\{\xi_1,\ldots,\xi_n\},
\quad
\E \Vol_1 (\cP_{n,1}^{\pm}) = 2 \E \max\{|\xi_1|,\ldots,|\xi_n|\}.
$$
Since the distribution function of $\xi_{(n)}$ is $\Phi^n(t)$ and since some power of $\Phi$ appears in the exact formula for $\E \Vol_d (\cP_{n,d})$, see~\eqref{eq:exp_vol_P_n_d}, one may ask whether a similar connection between $\E \Vol_d (\cP_{n,d})$ and \textit{one-dimensional} extremes exists for arbitrary $d$. This question will be affirmatively answered in Section~\ref{sec:multiple}, where we shall express $\E \Vol_d (\cP_{n,d})$ (or, which is almost the same, the intrinsic volumes of the regular simplex $S^{n-1}$) through the expected maximum in a normal sample $\xi_1,\ldots,\xi_n$ on the event that the maximum is attained with multiplicity $d$.

Let us now turn to the asymptotic results. Throughout, we consider the regime in which $n\to\infty$, while the dimension $d$ stays fixed.  We write $a_n\sim b_n$ if $\lim_{n\to\infty} a_n/b_n = 1$. Then,
\begin{equation}\label{eq:asympt_exp_vol}
\E \Vol_d(\cP_{n,d})
\sim
\kappa_d  (2\log n)^{d/2}
\;\;\; \text{ and } \;\;\;
\E \Vol_d(\cP_{n,d}^\pm)
\sim
\kappa_d  (2\log n)^{d/2}.
\end{equation}
The first formula is due to Affentranger~\cite[Theorem~4]{affentranger}. We did not found a reference for the second one, but anyway both formulae are not surprising since it is known~\cite{geffroy} that
\begin{equation}\label{eq:haus_dist_conv_hull_ball}
d_{\text{Haus}}\left(\frac 1 {\sqrt{2\log n}} \cP_{n,d}, \mathbb B_d\right) \toas 0,
\quad
d_{\text{Haus}}\left(\frac 1 {\sqrt{2\log n}} \cP_{n,d}^\pm, \mathbb B_d\right) \toas 0,
\end{equation}
where $d_{\text{Haus}}$ is the Hausdorff distance and $\mathbb B_d$ is the $d$-dimensional unit ball.
A result much more general than~\eqref{eq:asympt_exp_vol} and~\eqref{eq:haus_dist_conv_hull_ball} holds universally for the convex hull of $n$ independent realisations of any bounded Gaussian process; see Davydov~\cite{davydov}. It is therefore of interest to obtain a more refined asymptotics than that in~\eqref{eq:asympt_exp_vol} since such result would say something about the deviation between the ball of radius $\sqrt{2\log n}$ and the polytopes $\cP_{n,d}$ and $\cP_{n,d}^\pm$. Calka and Yukich~\cite[Theorem~1.4]{calka_yukich} proved\footnote{In fact, they considered a poissonized version of $\cP_{n,d}$; see Section~\ref{subsec:poissonized}.} that
$$
\E \Vol_d(\cP_{n,d})= \kappa_d (2\log n)^{d/2} \left(1-\frac{d\log \log n}{4 \log n} + O\left(\frac 1 {\log n}\right)\right),
\quad n\to\infty.
$$

We shall prove a more precise asymptotic formula. Let us first consider the case $d=1$. It is a standard result of the extreme-value theory~\cite[Theorem~1.5.3 on p.~14]{leadbetter_etal_book} and~\cite[Theorem 1.8.3 on p.~28]{leadbetter_etal_book} that both $u_n (\xi_{(n)} - u_n)$ and $u_{2n}(|\xi|_{(n)} - u_{2n})$ converge weakly to the Gumbel law with distribution function $\eee^{-\eee^{-z}}$, $z\in\R$, where  $u_n$ is the sequence given by
\begin{equation}\label{eq:def_u_n_0}
u_n = \sqrt{2\log n}  - \frac{\frac 12 \log \log n +\log (2\sqrt{\pi})}{\sqrt{2\log n}}.
\end{equation}
Observe that $|\xi|_{(n)}= \max\{\xi_1,\ldots,\xi_n, -\xi_1,\ldots,\xi_n\}$ should be normalized in the same way as $\xi_{(2n)}$, which expresses the fact that the $2n$ standard Gaussian variables $\pm \xi_1,\ldots, \pm \xi_n$ become asymptotically independent.
Taking the expectation (which can easily be justified, see~\cite{pickands_moments} for a general result) and noting that the expectation of the Gumbel distribution is the Euler constant $\gamma= -\Gamma'(1)$, we obtain
\begin{align*}
\E \Vol_1 (\cP_{n,1}) &= 2 \E  \xi_{(n)} = 2 (u_n + \gamma u_n^{-1}) +  o(u_n^{-1}),\\
\E \Vol_1 (\cP_{n,1}^{\pm}) &= 2 \E |\xi|_{(n)} = 2 (u_{2n} + \gamma u_{2n}^{-1}) +  o(u_{2n}^{-1}),
\end{align*}
as $n\to\infty$.
The next theorem generalizes these relations to arbitrary dimension $d$.
\begin{theorem}\label{theo:asympt_E_Vol}
If $d\in\N$ stays fixed and $n\to\infty$, then
\begin{align}
\E \Vol_d(\cP_{n,d})
&=
\kappa_d  u_n^d + d\kappa_d  u_n^{d-2} \left(\gamma - \sum_{j=2}^d \frac 1j\right) + o(u_n^{d-2}),\label{eq:E_Vol_asympt_simplex}\\
\E \Vol_d(\cP_{n,d}^\pm)
&=
\kappa_d  u_{2n}^d + d\kappa_d  u_{2n}^{d-2} \left(\gamma - \sum_{j=2}^d \frac 1j\right) + o(u_{2n}^{d-2}). \label{eq:E_Vol_asympt_cross}
\end{align}
\end{theorem}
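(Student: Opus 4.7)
The plan is to perform a Laplace-type asymptotic analysis of the exact integral from Theorem~\ref{theo:exp_polytopes}, using the Gumbel normalization $t=u_n+z/u_n$. This substitution is forced by extreme-value theory: $u_n$ is constructed precisely so that $n\bar\Phi(u_n+z/u_n)\to e^{-z}$, which is what makes the integrand concentrate in the rescaled variable $z$. After the substitution, formula \eqref{eq:exp_vol_P_n_d} factors as a prefactor of the form $\kappa_d\alpha_n^{d+1}u_n^d/d!\cdot(1+o(u_n^{-2}))$, where $\alpha_n:=n\varphi(u_n)/u_n\to 1$, multiplied by the integral over $\R$ of $\Phi^{n-d-1}(u_n+z/u_n)\,e^{-(d+1)z-(d+1)z^2/(2u_n^2)}$.

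The core of the argument is a two-term expansion of $\Phi^{n-d-1}(u_n+z/u_n)$ to order $o(u_n^{-2})$. Applying the Mills ratio $\bar\Phi(t)=(\varphi(t)/t)(1-1/t^2+O(1/t^4))$ at $t=u_n+z/u_n$ and collecting all $1/u_n^2$ contributions from $\varphi$, $1/t$, and $1/t^2$ yields, uniformly on compacts, $(n-d-1)\bar\Phi(u_n+z/u_n)=\alpha_n e^{-z}(1-(1+z+z^2/2)/u_n^2)+o(u_n^{-2})$; the higher-order term $(n-d-1)\bar\Phi^2=O(1/n)$ is absorbable. Exponentiating produces $\Phi^{n-d-1}(u_n+z/u_n)=e^{-\alpha_n e^{-z}}[1+\alpha_n e^{-z}(1+z+z^2/2)/u_n^2+o(u_n^{-2})]$. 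Tail control for $|z|$ large is routine: the factor $e^{-(d+1)z}$ dominates as $z\to+\infty$, and the doubly-exponential $e^{-\alpha_n e^{-z}}$ dominates as $z\to-\infty$, so dominated convergence applies.

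The resulting integrals are handled by the further substitution $w=\alpha_n e^{-z}$ (so $z=\log\alpha_n-\log w$), under which they reduce to moments of $w^{d-1}e^{-w}$ against polynomials in $\log w$, expressible via derivatives of $\Gamma$. The leading integral evaluates to $d!/\alpha_n^{d+1}$, canceling the $\alpha_n^{d+1}$ in the prefactor and recovering the classical $\kappa_d u_n^d$. The $O(u_n^{-2})$ integral involves $\Gamma$, $\Gamma'$, $\Gamma''$ at both $d+1$ and $d+2$; after applying the recursions $\psi(d+2)=\psi(d+1)+1/(d+1)$ and $\psi'(d+2)=\psi'(d+1)-1/(d+1)^2$, an algebraic collapse occurs and the correction reduces to $\frac{d\cdot d!}{\alpha_n^{d+1}}(1+\log\alpha_n-\psi(d+1))$. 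Combining with the prefactor and using $\psi(d+1)=-\gamma+\sum_{j=1}^d 1/j$ delivers the asserted coefficient $\gamma-\sum_{j=2}^d 1/j$; the term $\log\alpha_n=O(\log\log n/\log n)=o(u_n^{-2})$ is absorbed into the $o(u_n^{d-2})$ remainder. The symmetric estimate \eqref{eq:E_Vol_asympt_cross} is obtained by an analogous argument: rewrite \eqref{eq:exp_vol_P_n_d_symm} as an integral against the density $2\varphi$ and cdf $2\Phi-1$ of $|\xi|$, and repeat the analysis with $u_{2n}$ in place of $u_n$, reflecting that $|\xi|_{(n)}$ normalizes like the maximum of $2n$ standard Gaussians.

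The hard part is the algebraic collapse in the correction step: the expansion of $(\log\alpha_n-\log w)^2$ inside the $w$-integral generates a forest of $\psi$-, $\psi'$-, and $\log\alpha_n$-terms at both $d+1$ and $d+2$, and only the digamma/trigamma recursions force them to coalesce into the minimal expression above. A secondary subtlety, easy to miss, is that $\alpha_n\to 1$ only at the slow rate $\log\log n/\log n$ rather than $O(u_n^{-2})$, so at first glance the prefactor correction appears larger than the target error; this contribution, however, enters the final answer exclusively through $\log\alpha_n$, which is itself $o(u_n^{-2})$, so it never contaminates the stated $u_n^{d-2}$-order correction.
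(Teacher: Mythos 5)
Your proposal is correct and takes essentially the same route as the paper: Gumbel rescaling $t=u_n+z/u_n$, a second-order Mills-ratio expansion of $\bar\Phi$, reduction to the integrals $\int w^{d}e^{-w}(\log w)^k\,\dd w=\Gamma^{(k)}(d+1)$, and the digamma recursions to collapse the correction to $d\,d!\,(1-\psi(d+1))=d\,d!\,(\gamma-\sum_{j=2}^d\frac1j)$; the only structural difference is that the paper replaces $u_n$ by the exact solution $v_n$ of $\varphi(v_n)/v_n=1/n$ (showing $v_n=u_n+o((\log n)^{-1/2})$), which makes your $\alpha_n$ identically $1$, whereas you carry $\alpha_n=n\varphi(u_n)/u_n$ through the computation. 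One small slip in your bookkeeping: $\log\alpha_n$ is of order $(\log\log n)^2/\log n$, which is \emph{not} $o(u_n^{-2})$; this is harmless, however, because $\log\alpha_n$ enters the answer only through the term $d\kappa_d u_n^{d-2}\log\alpha_n$, so $\log\alpha_n=o(1)$ already suffices to absorb it into the $o(u_n^{d-2})$ remainder, exactly as you conclude.
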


The first formula can be interpreted as follows. In the first approximation, $\cP_{n,d}$ is close to the $d$-dimensional ball of radius $u_n$ centered at the origin. The volume of this ball is $\kappa_d u_n^d$.  The boundary of $\cP_{n,d}$ is close to the boundary of this ball,  however there is a small deviation between both boundaries. Consider some point on the sphere, for example $p_n := (u_n,0,\ldots, 0)$. Let $q_n= (u_n+h_n, 0,\ldots,0)$ be the point where the ray passing through $0$ and $p_n$ intersects the boundary of $\cP_{n,d}$.
So,  $h_n$ is the signed deviation between the boundaries of the ball and $\cP_{n,d}$ in the direction of $p_n$.  The $(d-1)$-dimensional volume of the $(d-1)$-dimensional sphere of radius $u_n$ equals $d\kappa_d u_n^{d-1}$. Hence, $\E \Vol_d (\cP_{n,d})$ should be close to $\kappa_d  u_n^d + d \kappa_d u_n^{d-1} \E h_n$. Comparing this with~\eqref{eq:E_Vol_asympt_simplex}, we obtain
$$
\E h_n = \frac {1+o(1)} {u_n} \left(\gamma -\sum_{j=2}^d \frac 1j\right).
$$
For example, in dimension $d=1$, it is known from extreme-value theory that $u_n h_n$ converges weakly to the Gumbel distribution whose expectation equals $\gamma$. For arbitrary $d\in\N$, Calka and Yukich~\cite[Theorem~1.2]{calka_yukich} showed\footnote{Actually, they considered a poissonized version of $\cP_{n,d}$, but~\eqref{eq:E_Vol_asympt_simplex} and the subsequent considerations remain valid for the poissonized version; see Section~\ref{subsec:poissonized}. The term $(d-1) \log \sqrt{2\pi}$ appears because Calka and Yukich~\cite[Eq.~(1.4)]{calka_yukich} used a normalization sequence which was  slightly different from $u_n$.} that $- u_n h_n + (d-1) \log \sqrt{2\pi}$ converges weakly to the marginal height distribution of the so-called Burgers festoon over $\R^{d-1}$. Thus, the expected height of the Burgers festoon over any point in $\R^{d-1}$ equals $\sum_{j=2}^d \frac 1j - \gamma + (d-1) \log \sqrt{2\pi}$.

Let us finally mention an equivalent version of Theorem~\ref{theo:asympt_E_Vol} in terms of the intrinsic volumes (to be defined in Section~\ref{subsec:intrinsic_and external}). The proof will be given in Sections~\ref{subsec:simplex_tsirelson} and~\ref{subsec:cross_tsirelson}.
\begin{theorem}
Let $d\in\N$ be fixed. The intrinsic volumes of the regular simplex $S^{n-1}=\conv[e_1,\ldots,e_n]$ and the regular crosspolytope $C^n=\conv[\pm e_1,\ldots,\pm e_n]$ satisfy, as $n\to\infty$,
\begin{align*}
V_d(S^{n-1}) &= \frac {(2\pi)^{d/2}}{d!} \left(u_n^d + d u_n^{d-2} \left(\gamma - \sum_{j=2}^d \frac 1j\right) + o(u_n^{d-2})\right)\sim \frac {(4\pi \log n)^{d/2}} {d!},\\
V_d(C^{n}) &= \frac {(2\pi)^{d/2}}{d!} \left(u_{2n}^d + d u_{2n}^{d-2} \left(\gamma - \sum_{j=2}^d \frac 1j\right) + o(u_{2n}^{d-2})\right)\sim \frac {(4\pi \log n)^{d/2}} {d!}.
\end{align*}
\end{theorem}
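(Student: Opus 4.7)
The plan is to reduce the asymptotic statement to Theorem~\ref{theo:asympt_E_Vol} by identifying the intrinsic volumes $V_d(S^{n-1})$ and $V_d(C^n)$ with scalar multiples of $\E\Vol_d(\cP_{n,d})$ and $\E\Vol_d(\cP_{n,d}^\pm)$, respectively. The tool is the general identity
$$
V_k(P) = \sum_{F \in \cF_k(P)} \Vol_k(F)\,\gamma(F,P),
$$
valid for any polytope $P$, where $\cF_k(P)$ denotes the set of $k$-dimensional faces. Because $S^{n-1}$ and $C^n$ are regular, all $k$-faces are congruent and this sum collapses to a product.

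First I would apply this to $S^{n-1}$. The $d$-faces are the $\binom{n}{d+1}$ regular simplices $\conv[e_{i_1},\ldots,e_{i_{d+1}}]$; each has edge length $\sqrt{2}$ and $d$-volume $\sqrt{d+1}/d!$. Plugging in the external angle formula~\eqref{eq:ext_angle_simplex} with $k=d$ and simplifying gives
$$
V_d(S^{n-1}) = \frac{n!}{(d!)^2 (n-d-1)!}\,(2\pi)^{d/2} \int_{-\infty}^{\infty} \Phi^{n-d-1}(t)\varphi^{d+1}(t)\dd t.
$$
Comparing with~\eqref{eq:exp_vol_P_n_d} yields the clean identity $V_d(S^{n-1}) = \frac{(2\pi)^{d/2}}{d!\,\kappa_d}\,\E\Vol_d(\cP_{n,d})$. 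The analogous computation for $C^n$ uses that it has $\binom{n}{d+1}2^{d+1}$ faces of dimension $d$, again regular simplices of edge length $\sqrt{2}$; combining with~\eqref{eq:ext_angle_cross_betke} and~\eqref{eq:exp_vol_P_n_d_symm} (and noting the factor $2^{d+1}$ matches $(2\varphi)^{d+1}$ versus $\varphi^{d+1}$) yields $V_d(C^n) = \frac{(2\pi)^{d/2}}{d!\,\kappa_d}\,\E\Vol_d(\cP_{n,d}^\pm)$.

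At this point both asymptotic formulas follow by substituting the expansions~\eqref{eq:E_Vol_asympt_simplex} and~\eqref{eq:E_Vol_asympt_cross} from Theorem~\ref{theo:asympt_E_Vol}, with the $\kappa_d$'s cancelling perfectly. The equivalents $\sim (4\pi\log n)^{d/2}/d!$ then come from the leading behaviour $u_n^d \sim (2\log n)^{d/2}$ and the identity $(2\pi)^{d/2}(2\log n)^{d/2} = (4\pi\log n)^{d/2}$.

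The only mildly nontrivial bookkeeping is the combinatorial identification in step one, in particular keeping track of the factor $\sqrt{d+1}$ from the face volume together with the $\sqrt{d+1}$ already present in~\eqref{eq:ext_angle_simplex} and~\eqref{eq:ext_angle_cross_betke}; these combine with $\binom{n}{d+1}$ to produce the factor $n!/(d!)^2(n-d-1)!$ that exactly matches the $\E\Vol$ formulas up to the scalar $(2\pi)^{d/2}/(d!\kappa_d)$. No analytic obstacle is expected, since all analysis is already absorbed in Theorem~\ref{theo:asympt_E_Vol}.
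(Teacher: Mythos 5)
Your proof is correct. The paper obtains the result by combining Theorem~\ref{theo:asympt_E_Vol} with the identity $V_d(S^{n-1}) = \frac{(2\pi)^{d/2}}{d!\,\kappa_d}\,\E\Vol_d(\cP_{n,d})$ (and its crosspolytope analogue), exactly as you do; the only difference lies in how that identity is established. The paper derives it from Tsirelson's Theorem~\ref{theo:tsirelson_spectrum}, observing that $\spec_d S^{n-1}$ is precisely $\cP_{n,d}$ and $\spec_d C^n$ is $\cP_{n,d}^\pm$, whereas you obtain it by computing $V_d$ from the face decomposition together with the external-angle formulas~\eqref{eq:ext_angle_simplex} and~\eqref{eq:ext_angle_cross_betke} and then matching the resulting integral against Theorem~\ref{theo:exp_polytopes}. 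Your bookkeeping checks out: $\binom{n}{d+1}\cdot\frac{\sqrt{d+1}}{d!}\cdot(2\pi)^{d/2}\sqrt{d+1} = \frac{(2\pi)^{d/2}}{d!}\cdot\frac{n!}{d!(n-d-1)!}$, which is $\frac{(2\pi)^{d/2}}{d!\,\kappa_d}$ times the prefactor in~\eqref{eq:exp_vol_P_n_d}, and the $2^{d+1}$ faces of $C^n$ are exactly absorbed by the $(2\varphi)^{d+1}$ in~\eqref{eq:exp_vol_P_n_d_symm}. One caveat on economy rather than correctness: within the paper's own logic, formula~\eqref{eq:exp_vol_P_n_d} is itself \emph{deduced} from the external-angle computation via Tsirelson, so your comparison retraces that equivalence in the reverse direction; but taking Theorems~\ref{theo:exp_polytopes} and~\ref{theo:asympt_E_Vol} as given, your argument is self-contained and non-circular. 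You could also bypass Theorem~\ref{theo:asympt_E_Vol} altogether by applying Theorem~\ref{theo:asympt_integral} with $\alpha=d+1$ directly to your explicit expression for $V_d$.
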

For $d=1$ (in which case $V_1$ is, up to a constant factor, the so-called mean width), these formulae were obtained by Finch~\cite{finch_simplex, finch_cross}.

\subsection{Organization of the paper}
In Section~\ref{sec:sudakov_tsirelson} we shall recall theorems due to Sudakov~\cite{vS76} and Tsirelson~\cite{tsirelson2} and use the latter to show that the formulae for the expected volumes of $\cP_{n,d}$, respectively $\cP_{n,d}^\pm$,  are equivalent to the formulae for the external angles (or intrinsic volumes) of the regular simplex, respectively crosspolytope.
In Section~\ref{sec:non_regular} we shall compute exactly the expected volumes of some generalized Gaussian polytopes including those of the form $\conv [l_1X_1,\ldots,l_nX_n]$, where $l_1,\ldots,l_n\geq 0$ are deterministic parameters.
In Section~\ref{sec:multiple} we shall relate the intrinsic volumes of the regular simplex/crosspolytope to the expected maximum in a normal sample on  the event that the maximum is attained with given multiplicity. Finally, in Section~\ref{sec:asymptotics} we shall prove an asymptotic formula for the integrals of the form $\int_{-\infty}^{\infty} \varphi^{\alpha}(t) \Phi^n(t)\dd t$, as $n\to\infty$,  which implies Theorem~\ref{theo:asympt_E_Vol}.

\section{Sudakov's and Tsirelson's theorems}\label{sec:sudakov_tsirelson}
\subsection{Intrinsic volumes and external angles}\label{subsec:intrinsic_and external}
For a bounded convex set $T\subset\R^n$  the \textit{intrinsic volumes} $V_0(T), \ldots, V_n(T)$ are defined by the Steiner formula
\begin{equation}\label{eq:steiner}
\Vol_n(T+r\mathbb B_n)=\sum_{k=0}^n \kappa_{n-k} V_k(T) r^{n-k}, \quad r\geq 0,
\end{equation}
where $\mathbb B_n$ denotes the $n$-dimensional unit ball and $\kappa_k=\pi^{k/2}/\Gamma(\frac k 2 +1)$ is the volume of $\mathbb B_k$.  An alternative definition is given by \textit{Kubota's formula}~\cite[p.~222]{SW08} which states that
\begin{equation}\label{eq:kubota}
V_m(T)= \binom nm\frac{\kappa_n}{\kappa_m \kappa_{n-m}}\E \Vol_m (T | L),
\end{equation}
where  $T|L$  is the projection of $T$ onto a uniformly chosen random $m$-dimensional linear subspace $L\subset \R^d$.
In particular, $V_d(T)= \Vol_d(T)$ is the $d$-dimensional volume, $V_{d-1}(T)$ is half the surface area, $V_0(T)=1$, while  $V_1(T)$ coincides with the mean width of $T$, up to a constant factor. Finally, if $T=P$ is a convex polytope, then~\cite[Eq.~(14.14) on p.~607]{SW08}
\begin{equation}\label{eq:intrinsic_volume_external_angles}
V_{m}(P) = \sum_{F\in \cF_m(P)} \Vol_{m}(F) \gamma(F, P),
\end{equation}
where $\cF_m(P)$ is the set of $m$-dimensional faces of $P$, and $\gamma(F, P)$ is the external angle at the face $F\in \cF_{m}(P)$.

Let us recall the definition of the external angles. The relative interior of the face $F\in \cF_m(P)$ is the interior of $F$ taken with respect to the affine hull $\aff (F)$ as the ambient space. The normal cone of the polytope $P\subset \R^n$ at its face $F$ is defined as
$$
\Nor(P,F) = \{u\in\R^n\colon \langle  u, p-x\rangle \leq 0 \text{ for all } p\in P\},
$$
where $x$ is any point in the relative interior of $F$.  The angle of this cone, referred to as the \textit{external angle} of $P$ at its face $F$,  is defined as
$$
\gamma(F,P) := \P[Z\in \Nor(P,F)],
$$
where $Z$ is a random vector having the standard normal distribution (or, more generally, any spherically symmetric distribution) on the linear hull of $\Nor(P,F)$.

\subsection{Sudakov's and Tsirelson's theorems}\label{subsec:sudakov_thm}
Let  $e_1,\ldots,e_n$ be the standard orthonormal basis in $\R^n$. The \textit{isonormal process} $\{\xi(x)\colon x\in \R^n\}$ is defined by
$$
\xi(x_1e_1+ \ldots + x_ne_n) = \sum_{i=1}^n \xi_i x_i, \quad x= (x_1,\ldots,x_n)\in\R^n,
$$
where $\xi_1,\ldots, \xi_n$ are i.i.d.\ standard normal random variables. Note that $\xi$ is a centered Gaussian process with covariance function $\E [\xi(x)\xi(y)] = \langle x, y\rangle$, $x,y\in\R^n$.
The next formula is due to Sudakov~\cite{vS76}.
\begin{theorem}[Sudakov]\label{theo:sudakov}
For every compact convex set $T\subset \R^n$ it holds that
\begin{equation}\label{2041}
V_1(T)= \sqrt{2\pi}\,\E\,\sup_{x\in T} \xi(x).
\end{equation}
\end{theorem}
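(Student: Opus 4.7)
The plan is to combine Kubota's formula \eqref{eq:kubota} with the polar decomposition of a standard Gaussian vector. Let $h_T(u) := \sup_{x \in T} \langle u, x\rangle$ denote the support function of $T$, and set $G := \sum_{i=1}^n \xi_i e_i$, so that $\sup_{x\in T}\xi(x) = h_T(G)$.

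My first step is to express both sides of \eqref{2041} in terms of the spherical mean $\E\, h_T(U)$, where $U$ is uniform on $\mathbb S^{n-1}$. For the right-hand side, I would decompose $G = R\, U$ with $R = \|G\|$ chi-distributed on $n$ degrees of freedom and $U$ independent of $R$. Positive homogeneity of $h_T$ then gives $\E h_T(G) = \E[R]\cdot \E h_T(U)$, where $\E[R] = \sqrt 2\,\Gamma((n+1)/2)/\Gamma(n/2)$ comes from a direct computation with the chi density. For the left-hand side, I would apply Kubota's formula \eqref{eq:kubota} with $m = 1$. Since for any line $L = \Span(u)$ with $\|u\| = 1$ the projection length equals $\Vol_1(T|L) = h_T(u)+h_T(-u)$, and since averaging this symmetric quantity over a uniformly random line $L$ is the same as averaging it over $U$ uniform on $\mathbb S^{n-1}$, one obtains
\begin{equation*}
V_1(T) = \frac{n\kappa_n}{\kappa_1\kappa_{n-1}}\,\E[h_T(U)+h_T(-U)] = \frac{2n\kappa_n}{\kappa_1\kappa_{n-1}}\,\E h_T(U).
\end{equation*}

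The remaining step is to match the two constants. Substituting $\kappa_k = \pi^{k/2}/\Gamma(k/2+1)$, $\kappa_1 = 2$, and $\Gamma(n/2+1) = (n/2)\Gamma(n/2)$ collapses the ratio to
\begin{equation*}
\frac{2n\kappa_n}{\kappa_1\kappa_{n-1}\,\E[R]} = \frac{2\sqrt\pi\,\Gamma((n+1)/2)/\Gamma(n/2)}{\sqrt 2\,\Gamma((n+1)/2)/\Gamma(n/2)} = \sqrt{2\pi},
\end{equation*}
which combined with the previous two displays yields \eqref{2041}. There is no genuine obstacle in this argument; the conceptual heart is that the radial and spherical components of $G$ decouple by positive homogeneity of $h_T$, and the remaining work is routine bookkeeping of Gamma factors.
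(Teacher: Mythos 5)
Your proof is correct: the identity $\sup_{x\in T}\xi(x)=h_T(G)$, the factorization $\E h_T(G)=\E[R]\,\E h_T(U)$ via positive homogeneity and the independence of $R=\|G\|$ and $U=G/\|G\|$, the application of Kubota's formula with $m=1$ (noting $\Vol_1(T|\Span(u))=h_T(u)+h_T(-u)$), and the Gamma-factor bookkeeping all check out. The paper itself states Sudakov's theorem without proof, citing \cite{vS76}, but your argument is exactly the one-dimensional instance of the mechanism the paper sketches for Tsirelson's theorem --- the projection of $T$ onto a uniform random subspace differs from the Gaussian spectrum only by an independent random scaling, here the $\chi_n$ factor $R$ --- so your route is the natural and standard one.
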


Tsirelson~\cite{tsirelson2} generalized Sudakov's formula to all intrinsic volumes as follows. Consider $d$ independent copies  $\{\xi_i(x)\colon x\in \R^n\}$, $1\leq i \leq d$, of the isonormal process. The $d$-dimensional spectrum of a compact convex set $T\subset \R^n$ is a random set
$$
\spec_d T = \{(\xi_1(x), \ldots, \xi_d(x))\colon x\in T\}\subset \R^d.
$$
\begin{theorem}[Tsirelson]\label{theo:tsirelson_spectrum}
For every $d\in \{1,\ldots,n\}$ and every compact convex set $T\subset \R^n$ it holds that
\begin{equation}\label{2042}
V_d(T)= \frac{(2\pi)^{d/2}}{d!\kappa_d} \E\,\Vol_d(\spec_d T).
\end{equation}
\end{theorem}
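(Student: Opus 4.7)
The plan is to realize $\spec_d T$ as the image of $T$ under a random $d\times n$ Gaussian matrix, decouple the "direction" and "scale" of this map via QR factorization, and then match constants using Kubota's formula.

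First, identify $\xi_i(x) = \langle a_i, x \rangle$ with $a_1,\ldots,a_d$ i.i.d.\ $N(0,I_n)$, so that $\spec_d T = A(T)$ for the random $d\times n$ matrix $A$ with rows $a_i$. Apply the QR decomposition $A^T = QR$, where $Q\in\R^{n\times d}$ has orthonormal columns and $R\in\R^{d\times d}$ is upper triangular with non-negative diagonal. By the Bartlett decomposition, $Q$ and $R$ are independent; $Q$ is Haar-uniform on the Stiefel manifold, so $L := \mathrm{colspan}(Q)$ is uniform on the Grassmannian $G(d,n)$; the entries of $R$ strictly above the diagonal are i.i.d.\ $N(0,1)$; and $R_{ii}\sim \chi_{n-i+1}$, all independent.

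Since $Q^T:L\to\R^d$ is an isometry with kernel $L^\perp$, the image $Q^T T$ is a congruent copy of the projection $T|L$, and the identity $A(T)=R^T(Q^T T)$ gives
\begin{equation*}
\Vol_d(\spec_d T) = |\det R|\,\Vol_d(T|L).
\end{equation*}
By the independence of $R$ and $L$, one then has $\E\Vol_d(\spec_d T) = \E|\det R|\cdot \E\Vol_d(T|L)$. The moment $\E|\det R|=\prod_{i=1}^d \E R_{ii}$ is computed from the identity $\E\chi_k = \sqrt{2}\,\Gamma((k+1)/2)/\Gamma(k/2)$; the resulting product telescopes to $2^{d/2}\,\Gamma((n+1)/2)/\Gamma((n-d+1)/2)$. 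Combined with Kubota's formula \eqref{eq:kubota} for $\E\Vol_d(T|L)$, this expresses $\E\Vol_d(\spec_d T)$ as an explicit multiple of $V_d(T)$.

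The main obstacle is the bookkeeping needed to see that the $n$-dependent prefactor collapses to the claimed $n$-free constant $d!\kappa_d/(2\pi)^{d/2}$. This is achieved by the Legendre duplication formula $\Gamma(z)\Gamma(z+\tfrac12)=2^{1-2z}\sqrt{\pi}\,\Gamma(2z)$ applied at $z=(n+1)/2$ and $z=(n-d+1)/2$: the Gamma ratios reduce to $2^{-d}\,n!/(n-d)!$, which cancels exactly against $\binom{n}{d}$ and $\kappa_n/\kappa_{n-d}$ coming from Kubota's formula, leaving only $d!\kappa_d/(2\pi)^{d/2}$. A convenient sanity check is $T=\mathbb B_n$, where $\spec_d T$ is the random ellipsoid $A(\mathbb B_n)$ with $d$-volume $\kappa_d\sqrt{\det(AA^T)}$, and where $V_d(\mathbb B_n)=\binom{n}{d}\kappa_n/\kappa_{n-d}$ is classical.
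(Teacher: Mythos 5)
Your argument is correct, and it is precisely the route the paper itself indicates (the paper does not reproduce Tsirelson's proof but notes that it ``is based on the observation that the projection of $T$ onto a random uniform $d$-dimensional linear subspace differs from $\spec_d T$ by an explicit random linear transformation''): your Bartlett/QR factorization $A^T=QR$ supplies exactly that decomposition, with $L=\mathrm{colspan}(Q)$ uniform on the Grassmannian, $\E\prod_{i=1}^d R_{ii}=2^{d/2}\Gamma(\tfrac{n+1}{2})/\Gamma(\tfrac{n-d+1}{2})$, and the Legendre duplication step collapsing the constant to $d!\kappa_d/(2\pi)^{d/2}$ as required. I verified the constant bookkeeping and the $T=\mathbb B_n$ sanity check; both are correct.
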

In fact, both theorems were established in an arbitrary Hilbert space $H$, but in the present paper we need only the finite-dimensional case $H=\R^n$.
For further results related to Tsirelson's theorem, we refer to the work of Vitale~\cite{vitale01,vitale08,vitale10} and Chevet~\cite{chevet}. The proof of Tsirelson's formula is based on the observation that the projection of $T$ onto a random uniform $d$-dimensional linear subspace differs from $\spec_d T$ by an explicit random linear transformation. Similar idea was used by~\citet{baryshnikov_vitale} to prove\footnote{Baryshnikov and Vitale~\cite{baryshnikov_vitale} stated their result when $P$ is a regular simplex or a cube, but their proof applies to general polytopes.} the following

\begin{theorem}[Baryshnikov and Vitale]
If $P\subset \R^n$ is a polytope, then the number of $k$-faces of a projection of $P$ onto a  random, uniformly distributed linear subspace of dimension $d$ has the same distribution as the number of $k$-faces of $\spec_d P$.
\end{theorem}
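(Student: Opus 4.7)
The plan is to realize $\spec_d P$ as the image of $P$ under a random Gaussian $d\times n$ matrix, then factor that matrix as ``isometry followed by invertible square matrix'', and finally invoke that neither an isometric identification nor an invertible linear map alters the $f$-vector of a polytope.

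Concretely, I would let $A$ be the $d\times n$ random matrix with entries $A_{ij}=\xi_i(e_j)$, so that $\spec_d P = AP$ by definition. Since the entries of $A$ are i.i.d.\ standard Gaussians, the law of $A$ is invariant under right multiplication by any orthogonal matrix in $O(n)$, and therefore the row space $L:=\range A^T$ (almost surely of dimension $d$) is uniformly distributed on the Grassmannian $G(d,n)$ of $d$-dimensional linear subspaces of $\R^n$. I would then write the polar-type decomposition $A=MQ$ with $M:=(AA^T)^{1/2}$ symmetric positive definite and $Q:=M^{-1}A$ a $d\times n$ matrix with orthonormal rows spanning $L$. The map $x\mapsto Qx$ is then the composition of the orthogonal projection $\Pi_L\colon\R^n\to L$ with the isometric identification of $L$ with $\R^d$ given by the orthonormal frame of rows of $Q$.

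From here the argument is short. The polytope $QP\subset\R^d$ is congruent to $\Pi_L P\subset L$, so they share the same number of $k$-faces; and because $M$ is almost surely invertible, the linear bijection $y\mapsto My$ of $\R^d$ carries the face lattice of $QP$ onto the face lattice of $MQP=AP=\spec_d P$, preserving the number of $k$-faces. Combining the two reductions yields the almost sure identity $f_k(\spec_d P)=f_k(\Pi_L P)$, where $L$ is uniform on $G(d,n)$, which gives the asserted equality in distribution. The only nontrivial point in the whole argument is the orthogonal invariance step that identifies the law of $L$ as uniform on $G(d,n)$; the remaining combinatorial assertions --- that congruent polytopes share an $f$-vector and that invertible linear images preserve the face lattice --- are standard, and a.s.\ invertibility of $M$ is immediate from absolute continuity of the Gaussian law on $d\times n$ matrices.
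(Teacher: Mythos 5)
Your proof is correct and follows exactly the route the paper indicates (it only sketches the idea, attributing the result to Baryshnikov and Vitale): $\spec_d P = AP$ for a Gaussian matrix $A$, and the factorization $A=(AA^T)^{1/2}Q$ exhibits $\spec_d P$ as an a.s.\ invertible linear image of the orthogonal projection of $P$ onto the uniformly distributed row space of $A$, so the $f$-vectors agree almost surely. The orthogonal-invariance argument identifying the law of the row space as uniform on the Grassmannian, and the face-lattice preservation under congruence and invertible linear maps, are all handled correctly.
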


\subsection{Example: Regular simplex and the Gaussian polytope}\label{subsec:simplex_tsirelson}
We would like to apply Tsirelson's Theorem~\ref{theo:tsirelson_spectrum} to the regular simplex
$$
T := S^{n-1} = \conv[e_1,\ldots,e_n] \subset \R^n.
$$
It is easy to see that $\spec_d S^{n-1}$ is the convex hull of $n$ i.i.d.\ random vectors $X_1,\ldots,X_n$ having a standard Gaussian distribution on $\R^d$, so that we can identify $\spec_d S^{n-1}$ with the Gaussian polytope $\cP_{n,d}$.  Theorem~\ref{theo:tsirelson_spectrum} yields the relation
\begin{equation}\label{eq:tsirelson_simplex}
V_d(S^{n-1})= \frac{(2\pi)^{d/2}}{d!\kappa_d} \E\,\Vol_d(\cP_{n,d}), \quad  1\leq d < n.
\end{equation}
To compute $V_d(S^{n-1})$, note that the number of $d$-faces of $S^{n-1}$ is $\binom {n}{d+1}$, and any such face has $d$-volume $\sqrt{d+1}/d!$ (see, e.g.,\ Lemma~\ref{lem:volume_simplex}, below). Using~\eqref{eq:intrinsic_volume_external_angles} together with the formula for the external angle~\eqref{eq:ext_angle_simplex} yields
\begin{equation}\label{eq:V_k_S_n}
V_d(S^{n-1}) =   (2\pi)^{d/2} \binom{n}{d+1}\frac{d+1}{d!} \int_{-\infty}^{\infty} \Phi^{n-d-1}(t) \varphi^{d+1}(t)\dd t, \quad 0\leq d <  n.
\end{equation}
Combining Theorem~\ref{theo:asympt_E_Vol} with~\eqref{eq:tsirelson_simplex}, we obtain the asymptotic formula
$$
V_d(S^{n-1}) = \frac {(2\pi)^{d/2}}{d!} \left(u_n^d + d u_n^{d-2} \left(\gamma - \sum_{j=2}^d \frac 1j\right) + o(u_n^{d-2})\right)\sim \frac{(4\pi \log n)^{d/2}}{d!},
$$
where $u_n$ is given by~\eqref{eq:def_u_n_0}.
Here, $d\in\N$ is fixed and $n\to\infty$.
By~\eqref{eq:tsirelson_simplex}, the exact formula~\eqref{eq:V_k_S_n} is equivalent to Efron's~\cite{efron} formula
\begin{equation}\label{eq:V_k_S_n_tsirelson}
\E \Vol_d (\cP_{n,d})
= \frac{\kappa_d \, n!}{d!(n-d-1)!} \int_{-\infty}^{\infty} \Phi^{n-d-1}(t) \varphi^{d+1}(t)\dd t, \quad 1\leq d < n.
\end{equation}

Using Kubota's formula~\eqref{eq:kubota} and the fact that an orthogonal projection of i.i.d.\ Gaussian sample is again i.i.d.\ Gaussian, one obtains formulae for expectations of arbitrary intrinsic volumes of the Gaussian polytope:
\begin{equation*}
\E V_m (\cP_{n,d})
=  (m+1) \binom{n}{m+1} \binom{d}{m} \frac{\kappa_d}{\kappa_{d-m}}  \int_{-\infty}^{\infty} \Phi^{n-m-1}(t) \varphi^{m+1}(t)\dd t.
\end{equation*}

\begin{remark}
Efron~\cite[p.~343]{efron} states that in the Gaussian simplex case $n=d+1$  the right-hand side of~\eqref{eq:V_k_S_n_tsirelson} should be doubled. This does not seem to be necessary. Indeed, Miles~\cite{miles} computed the moments of $\Vol_d (\cP_{d+1,d})$. Formula~\eqref{eq:V_k_S_n_tsirelson} for $n=d+1$ and the formula of Miles read as follows:
$$
\E \Vol_d (\cP_{d+1,d}) = \frac{\sqrt{d+1}}{\Gamma(\frac d2 + 1) 2^{d/2}},
\quad
\E \Vol_d (\cP_{d+1,d}) = \frac{\sqrt{d+1}\, 2^{d/2}\Gamma(\frac d2+ \frac 12)}{\Gamma(\frac 12)d!}.
$$
These formulae are equivalent by the Legendre duplication formula for the Gamma function.
\end{remark}

\subsection{Example: Regular crosspolytope and the symmetric Gaussian polytope}\label{subsec:cross_tsirelson}
Let us now apply Theorem~\ref{theo:tsirelson_spectrum} to the regular crosspolytope
$$
T:= C^n = \conv[\pm e_1,\ldots, \pm e_n]\subset \R^n.
$$
The basic observation is that the  $d$-dimensional spectrum $\spec_d C^n$ is the convex hull of $n$ independent standard Gaussian vectors $X_1,\ldots,X_n$ in $\R^d$ along with their opposites $-X_1,\ldots,-X_n$. That is to say, $\spec_d C^n$ has the same distribution as the symmetric Gaussian polytope $\cP_{n,d}^\pm$. Theorem~\ref{theo:tsirelson_spectrum} yields
\begin{equation}\label{eq:tsirelson_crosspoly}
V_d(C^{n})= \frac{(2\pi)^{d/2}}{d!\kappa_d} \E\,\Vol_d(\cP_{n,d}^\pm), \quad  1\leq d\leq n.
\end{equation}
To compute $V_d(C^n)$ for $1\leq d < n$ observe that all $d$-faces of $C^n$ are isometric to $S^{d}$, their number is $2^{d+1} \binom {n}{d+1}$, and the $d$-volume of each face is $\sqrt{d+1}/d!$. By~\eqref{eq:intrinsic_volume_external_angles} and \eqref{eq:ext_angle_cross_betke} one obtains that
\begin{equation}\label{eq:V_k_C_n}
V_d(C^n) = (2\pi)^{d/2}  \binom{n}{d+1} \frac{d+1}{d!} \int_{0}^{\infty} (2\Phi(t)-1)^{n-d-1} (2\varphi(t))^{d+1}\dd t,
\quad 0\leq d < n,
\end{equation}
which is due to~\citet[Theorem~2.1]{betke_henk}.
Combining Theorem~\ref{theo:asympt_E_Vol} with~\eqref{eq:tsirelson_crosspoly}, we obtain the asymptotic formula
$$
V_d(C^{n}) = \frac {(2\pi)^{d/2}}{d!} \left(u_{2n}^d + d u_{2n}^{d-2} \left(\gamma - \sum_{j=2}^d \frac 1j\right) + o(u_{2n}^{d-2})\right)\sim \frac{(4\pi \log n)^{d/2}}{d!},
$$
where $u_{2n}$ is given by~\eqref{eq:def_u_n_0}, $d$ stays fixed, and $n\to\infty$.
By~\eqref{eq:tsirelson_crosspoly}, formula~\eqref{eq:V_k_C_n} is equivalent to
\begin{equation}\label{eq:V_k_C_n_tsirelson}
\E \Vol_d(\cP_{n,d}^\pm) = \frac{\kappa_d \, n!}{d!(n-d-1)!}\int_{0}^{\infty} (2\Phi(t)-1)^{n-d-1} (2\varphi(t))^{d+1}\dd t,
\;\;\; 1\leq d < n,
\end{equation}
which was stated in Theorem~\ref{theo:exp_polytopes}.
For $d=n$, we have $V_d(C^d) = 2^d/d!$ and hence~\eqref{eq:tsirelson_crosspoly} yields
$$
\E\,\Vol_d(\cP_{d,d}^\pm) = \frac{2^{d/2}}{\Gamma(\frac d2 +1)}.
$$
Using Kubota's formula~\eqref{eq:kubota} and the fact that the $m$-dimensional projection of $\cP_{n,d}^\pm$ has the same law as $\cP_{n,m}^\pm$, we obtain arbitrary expected intrinsic volumes of the symmetric Gaussian polytope:
$$
\E V_m(\cP_{n,d}^\pm) = (m+1) \binom{n}{m+1} \binom{d}{m} \frac{\kappa_d}{\kappa_{d-m}} \int_{0}^{\infty} (2\Phi(t)-1)^{n-m-1} (2\varphi(t))^{m+1}\dd t
$$
for $1\leq d < n$. \citet{HMR04} derived the asymptotics of the expected number of faces of $\cP_{n,d}, \cP_{n,d}^\pm$, as well as of  $\E V_{d-1}(\cP_{n,d})$ and $\E V_{d-1}(\cP_{n,d}^\pm)$ (which is half the surface area of the respective polytope).

\subsection{Example: Rectangular simplex and the Gaussian polytope with \texorpdfstring{$0$}{0}}\label{subsec:simplex_tsirelson_rectangular}
The $d$-dimensional spectrum of the rectangular simplex
$$
T := S^{n}_0 = \conv[0,e_1,\ldots,e_n] \subset \R^n
$$
can be identified with the Gaussian polytope with $0$: $\cP_{n,d}^{(0)} = \conv [0, X_1,\ldots,X_n]$.
Tsirelson's Theorem~\ref{theo:tsirelson_spectrum} yields  the relation
\begin{equation}\label{eq:tsirelson_simplex_zero}
V_d(S^{n}_0)= \frac{(2\pi)^{d/2}}{d!\kappa_d} \E\,\Vol_d(\cP_{n,d}^{(0)}), \quad  1\leq d \leq  n.
\end{equation}
Betke and Henk~\cite[Theorem~2.2]{betke_henk} computed the intrinsic volumes of $S^n_0$:
\begin{equation}\label{eq:V_k_S_n_zero}
V_d(S^{n}_0) = \frac{\binom n d}{d! 2^{n-d}} + (2\pi)^{d/2} \binom{n}{d+1}\frac{d+1}{d!}  \int_0^{\infty}  \Phi^{n-d-1}(t)\varphi^{d+1}(t)\dd t
\end{equation}
for $1\leq d\leq n$.
Together with the relation~\eqref{eq:tsirelson_simplex_zero} this yields formula~\eqref{eq:exp_vol_P_n_d_zero} for $\E \Vol_d(\cP_{n,d}^{(0)})$. See  Section~\ref{sec:non_regular} for more general rectangular simplices.

The asymptotics of $V_d(S^{n}_0)$ and $\E \Vol_d(\cP_{n,d}^{(0)})$ as $n\to\infty$ are exactly the same as for $V_d(S^{n-1})$ and $\E \Vol_d(\cP_{n,d})$ since the term $\binom n d/(d! 2^{n-d})$ is exponentially small, while the integral in~\eqref{eq:V_k_S_n_zero} has the same asymptotics as the integral in~\eqref{eq:V_k_S_n} by Remark~\ref{rem:same_asymptotics}, below.


\subsection{Poissonized versions}\label{subsec:poissonized}
Let $N(\lambda)$ be a Poisson random variable with parameter $\lambda>0$ and assume that $N(\lambda)$ is independent of everything else. Then, the expected volumes of the ``poissonized'' Gaussian polytopes generated by $N(\lambda)$ points are given by
\begin{align}
\E \Vol_d(\cP_{N(\lambda),d})
&=
\kappa_d  \frac{\lambda^{d+1}} {d!}\int_{-\infty}^{\infty} \eee^{\lambda (\Phi(t)-1)} \varphi^{d+1}(t)\dd t, \label{eq:exp_vol_P_n_d_poi}\\
\E \Vol_d(\cP_{N(\lambda),d}^\pm)
&=
\kappa_d  \frac{(2\lambda)^{d+1}} {d!} \int_{0}^{\infty}\eee^{2\lambda(\Phi(t)-1)} \varphi^{d+1}(t)\dd t.
\label{eq:exp_vol_P_n_d_symm_poi}
\end{align}
For example, the first formula can be derived as follows: Using the formula for the total probability and then~\eqref{eq:exp_vol_P_n_d}, we obtain
\begin{multline}
\E \Vol_d(\cP_{N(\lambda),d})
=
\sum_{n=d+1}^\infty  \eee^{-\lambda} \frac{\lambda^{n}}{n!}\E \Vol_d(\cP_{n,d})
\\=
\kappa_d \eee^{-\lambda} \frac{\lambda^{d+1}} {d!} \sum_{n=d+1}^\infty \frac {\lambda^{n-d-1}}{(n-d-1)!} \int_{-\infty}^{\infty} \Phi^{n-d-1}(t) \varphi^{d+1}(t)\dd t.
\end{multline}
Interchanging the sum and the integral and using the Taylor series of the exponential yields~\eqref{eq:exp_vol_P_n_d_poi}. The proof of~\eqref{eq:exp_vol_P_n_d_symm_poi} is similar. The asymptotic behavior of the poissonized polytopes is essentially the same as for the usual ones, see Theorem~\ref{theo:asympt_E_Vol}, but we have to replace $n$ by $\lambda$ throughout:
\begin{align}
\E \Vol_d(\cP_{N(\lambda),d})
&=
\kappa_d  u_\lambda^d + d\kappa_d  u_\lambda^{d-2} \left(\gamma - \sum_{j=2}^d \frac 1j\right) + o(u_\lambda^{d-2}),\label{eq:E_Vol_asympt_simplex_poi}\\
\E \Vol_d(\cP_{N(\lambda),d}^\pm)
&=
\kappa_d  u_{2\lambda}^d + d\kappa_d  u_{2\lambda}^{d-2} \left(\gamma - \sum_{j=2}^d \frac 1j\right) + o(u_{2\lambda}^{d-2}), \label{eq:E_Vol_asympt_cross_poi}
\end{align}
as $\lambda\to\infty$, where $u_\lambda$ is defined by~\eqref{eq:def_u_n_0} with $n$ replaced by $\lambda$. This will be shown in Theorem~\ref{theo:asympt_integral_poi}.

\subsection{Example: Cube and the Gaussian zonotope}\label{subsec:cube_tsirelson}
The intrinsic volumes of the unit cube $Q_n=[0,1]^n$ are well known: $V_k(Q_n) = \binom nk$. It is easy to see that the spectrum $\spec_d Q_n$ is a Gaussian zonotope defined as the Minkowski sum of the segments $[0,X_1], \ldots, [0,X_n]$, where $X_1,\ldots,X_n$ are independent standard Gaussian vectors in $\R^d$:
$$
\zon\{X_1,\ldots,X_n\} = \{ t_1 X_1 + \ldots + t_nX_n \colon t_1,\ldots,t_n\in [0,1]\}.
$$
Using Theorem~\ref{theo:tsirelson_spectrum} one obtains a formula for the expected volume of the Gaussian zonotope:
$$
\E \Vol_d(\zon\{X_1,\ldots,X_n\}) = \frac{n!}{2^{d/2} (n-d)! \Gamma\left(\frac d2 + 1\right)}.
$$
Using Kubota's formula, one obtains a formula for all expected intrinsic volumes of the Gaussian zonotope:
$$
\E V_m(\zon\{X_1,\ldots,X_n\}) = \binom dm \frac{\kappa_d}{\kappa_m\kappa_{d-m}}\cdot \frac{n!}{2^{m/2} (n-m)! \Gamma\left(\frac m2 + 1\right)}.
$$
Somewhat more generally, one can consider the parallelotope $Q_n(l_1,\ldots,l_n) := [0,l_1]\times \ldots \times [0, l_n]\subset \R^n$ with side lengths $l_1,\ldots,l_n>0$. The intrinsic volumes are given by the elementary symmetric functions of $l_1,\ldots,l_n$:
\begin{equation}\label{eq:symm_poly}
V_k([0,l_1]\times \ldots \times [0, l_n])  = \sum_{1\leq i_1 < \ldots < i_k \leq n} l_{i_1}\ldots l_{i_k}:= s_k(l_1,\ldots,l_n).
\end{equation}
The $k$-dimensional spectrum  is the Minkowski sum of the segments $[0,l_1X_1], \ldots, [0, l_n X_n]$, and we obtain from Tsirelson's Theorem~\ref{theo:tsirelson_spectrum} that
$$
\E \Vol_d(\zon\{l_1X_1,\ldots,l_nX_n\}) = \frac{d!s_d(l_1,\ldots,l_n)}{2^{d/2} \Gamma\left(\frac d2 + 1\right)}.
$$

\begin{remark}
Further applications of Tsirelson's theorem can be obtained by taking $T$ to be the Schl\"afli orthoscheme $\conv[e_1,e_1+e_2,\ldots, e_1+\ldots+e_n]$, which allows to compute the expected volume of the convex hull generated by a Gaussian random walk; see~\cite{kabluchko_zaporozhets_sobolev}, where also the zonotope generated by the Gaussian random walk and some further examples were studied.
\end{remark}

\section{Heteroscedastic Gaussian polytopes}\label{sec:non_regular}
Recall that $X_1,\ldots,X_n$ is a standard normal sample in $\R^d$. In this section we shall compute the expected volumes of the polytopes
\begin{align*}
&\cP_{n,d} (l_1,\ldots,l_n) := \conv [l_1 X_1,\ldots,l_n X_n],\\
&\cP_{n,d}^\pm  (l_1^+,l_1^-,\ldots,l_n^+, l_n^-) :=  \conv[l_1^+ X_1, -l_1^- X_1, \ldots,l_n^+ X_n, -l_n^- X_n],
\end{align*}
where $l_i, l_i^+, l_i^-$, $1\leq i\leq n$,  are positive parameters. In the case when all parameters are equal to $1$, we recover the random polytopes $\cP_{n,d}$ and $\cP_{n,d}^\pm$. By Tsirelson's Theorem~\ref{theo:tsirelson_spectrum}, it suffices to compute the intrinsic volumes of the corresponding non-regular simplices and crosspolytopes. This will  generalize a formula due to~\citet{henk_cifre,henk_cifre_notes}.

\subsection{Non-regular simplices}
Fix some $l_1,\ldots,l_n > 0$ and consider the simplex
\begin{equation}\label{eq:def_S}
S= \conv [l_1 e_1,\ldots, l_n e_n],
\end{equation}
where $e_1,\ldots,e_n$ is the standard orthonormal basis in $\R^n$.
Recall that $\varphi$ and $\Phi$ are the density and the distribution function of the standard normal distribution; see~\eqref{eq:def_phi}.
\begin{theorem}\label{theo:intrinsic_volumes_simplex_gen}
For every $1\leq m\leq n$, the $(m-1)$-st intrinsic volume of the simplex $S$ is given by the formula
\begin{multline*}
V_{m-1} (S)
=
\frac{1}{(m-1)!}
\sum_{1\leq i_1<\ldots < i_m \leq n}
\Bigg\{l_{i_1}\ldots l_{i_m} \left(\frac1{l_{i_1}^2} + \ldots + \frac1{l_{i_m}^2}\right)
\phantom{\left(x\sqrt{\frac1{l_{i_1}^2} + \ldots + \frac1{l_{i_m}^2}}\right)}\\
\int_{\R}  \varphi\left(x\sqrt{\frac1{l_{i_1}^2} + \ldots + \frac1{l_{i_m}^2}}\right) \prod_{i\notin \{i_1,\ldots,i_m\}} \Phi\left(\frac x {l_i}\right)\dd x\Bigg\}.
\end{multline*}
\end{theorem}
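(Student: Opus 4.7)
The plan is to apply the face decomposition \eqref{eq:intrinsic_volume_external_angles} to the simplex $S$. The $(m-1)$-dimensional faces of $S$ are in bijection with $m$-element subsets $I=\{i_1<\ldots<i_m\}\subset\{1,\ldots,n\}$ via $F_I:=\conv[l_{i_1}e_{i_1},\ldots,l_{i_m}e_{i_m}]$, so it remains to compute $\Vol_{m-1}(F_I)$ and $\gamma(F_I,S)$ for each such $I$ and to sum over $I$.

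For the volume, I would note that $F_I$ is the face opposite the origin in the rectangular simplex $\conv[0,l_{i_1}e_{i_1},\ldots,l_{i_m}e_{i_m}]$, whose $m$-volume equals $l_{i_1}\cdots l_{i_m}/m!$ and whose distance from $0$ to $\aff(F_I)$ equals $1/a$, where $a:=\sqrt{\sum_{j=1}^m l_{i_j}^{-2}}$; the identity (volume) $=(1/m)(\text{height})(\text{base area})$ then gives
$$\Vol_{m-1}(F_I)=\frac{l_{i_1}\cdots l_{i_m}}{(m-1)!}\,a.$$
For the external angle, I would describe $\Nor(S,F_I)$ explicitly. Since $\langle u,\cdot\rangle$ must be constant on $F_I$ for $u\in\Nor(S,F_I)$, the constraints $\langle u,l_ke_k-x\rangle\leq 0$ for every vertex $l_ke_k$ of $S$ become $u_{i_j}l_{i_j}=c$ for $j=1,\ldots,m$ and $u_kl_k\leq c$ for $k\notin I$, with common value $c=\langle u,x\rangle\in\R$. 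Consequently the linear hull of $\Nor(S,F_I)$ has dimension $n-m+1$ with orthonormal basis $\{v_0\}\cup\{e_k:k\notin I\}$, where $v_0:=a^{-1}\sum_{j=1}^m l_{i_j}^{-1}e_{i_j}$. Writing a standard Gaussian on this subspace as $Z=\xi_0 v_0+\sum_{k\notin I}\xi_k e_k$ with i.i.d.\ standard normals $\xi_0,\{\xi_k\}_{k\notin I}$, the event $Z\in\Nor(S,F_I)$ reduces to $\bigcap_{k\notin I}\{\xi_k\leq \xi_0/(al_k)\}$, since $c=\xi_0/a$. Conditioning on $\xi_0=t$ and substituting $x=t/a$ yields
$$\gamma(F_I,S)=\int_{\R}\varphi(t)\prod_{k\notin I}\Phi\!\left(\frac{t}{al_k}\right)dt=a\int_{\R}\varphi(ax)\prod_{k\notin I}\Phi\!\left(\frac{x}{l_k}\right)dx.$$

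Multiplying $\Vol_{m-1}(F_I)\cdot\gamma(F_I,S)$ produces the factor $a\cdot a=a^2=\sum_j l_{i_j}^{-2}$ that appears in the theorem, and summing over $m$-subsets $I$ delivers the asserted formula. I expect the main obstacle to be the external-angle calculation: one must identify the correct linear hull of $\Nor(S,F_I)$ (which is $(n-m+1)$-dimensional, one more than the codimension of $F_I$ inside $\aff(S)$, because of the direction perpendicular to $\aff(S)$ in $\R^n$) and choose an orthonormal basis adapted to the inequalities defining the cone, so that membership in the cone collapses to $n-m$ one-sided comparisons against a single common auxiliary Gaussian $\xi_0$; once this is in place, the volume computation, the substitution $x=t/a$, and the summation over $I$ are straightforward.
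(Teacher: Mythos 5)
Your proposal is correct and follows essentially the same route as the paper: the face decomposition \eqref{eq:intrinsic_volume_external_angles}, the identification of the $(m-1)$-faces with $m$-subsets of the vertices, and the reduction of the external angle to a one-dimensional integral by placing a standard Gaussian on the $(n-m+1)$-dimensional linear hull of the normal cone and comparing the $n-m$ free coordinates against a single auxiliary Gaussian. The only cosmetic differences are that you compute $\Vol_{m-1}(F_I)$ by a height-times-base argument instead of the Gram determinant of Lemma~\ref{lem:volume_simplex}, and that you realize the Gaussian on the linear hull via an explicit orthonormal basis rather than the paper's characteristic-function identification of the law of $\xi_*$.
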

\begin{remark}
The $d$-dimensional spectrum of $S$ is the heteroscedastic Gaussian polytope $\conv[l_1X_1,\ldots,l_nX_n]$.  Tsirelson's Theorem~\ref{theo:tsirelson_spectrum} yields
\begin{equation}\label{eq:tsirelson_heterosced_gauss_poly}
\E \Vol_d (\conv [l_1X_1,\ldots,l_n X_n]) = \frac{d!  V_d(S)}{\Gamma(\frac d2+1) 2^{d/2}}.
\end{equation}
\end{remark}
\begin{example}\label{ex:simplex_non_reg_with_zero}
Let us fix $l_1,\ldots,l_{n-1}>0$ and let $l_n\downarrow 0$. It follows from Theorem~\ref{theo:intrinsic_volumes_simplex_gen} that the intrinsic volumes of the simplex $S:= \conv [0, l_1e_1,\ldots, l_{n-1} e_{n-1}]$ are given by
\begin{multline*}
(m-1)! V_{m-1} (S) = \frac 1 {2^{n-m}}\sum_{1\leq i_1 < \ldots < i_{m-1} < n}  l_{i_1} \ldots l_{i_{m-1}}
\\+ \sum_{1\leq i_1 < \ldots < i_m < n}
l_{i_1}\ldots l_{i_m} \left(\frac1{l_{i_1}^2} + \ldots + \frac1{l_{i_m}^2}\right) \int_0^{\infty} \varphi \left(x\sqrt{\frac1{l_{i_1}^2} + \ldots + \frac1{l_{i_m}^2}}\right) \prod_{\substack{1\leq i < n\\i\notin \{i_1,\ldots,i_m\} }} \Phi\left(\frac x {l_i}\right)\dd x.
\end{multline*}
For the expected volume of the Gaussian polytope with $0$, we obtain, by taking $l_1=\ldots=l_{n-1} = 1$ and using~\eqref{eq:tsirelson_heterosced_gauss_poly},
\begin{multline*}
 \E \Vol_d (\conv [0, X_1,\ldots,X_{n-1}])
\\
\begin{aligned}
&=
\frac 1 {\Gamma(\frac d2+1) 2^{d/2}} \left\{\frac 1 {2^{n-d-1}} \binom {n-1} d + \binom {n-1} {d+1} (d+1) \int_0^{\infty} \varphi(x\sqrt{d+1}) \Phi^{n-d-2}(x)\dd x\right\}\\
&=
\frac {\binom {n-1}d} {2^{n- \frac d2-1} \Gamma(\frac d2+1) } + \frac{\kappa_d (n-1)!}{d!(n-d-2)!} \int_0^{\infty}  \Phi^{n-d-2}(x)\varphi^{d+1}(x)\dd x,
\end{aligned}
\end{multline*}
which proves~\eqref{eq:exp_vol_P_n_d_zero} if we insert $n+1$ instead of $n$.
\end{example}
\begin{remark}
Recall that an $(n-1)$-dimensional simplex is determined (up to isometry) by $n(n-1)/2$ parameters, for example the edge lengths. The family of simplices considered here is determined by $n$ parameters. For $n>3$, it is a strict subfamily of the family of all simplices.
\end{remark}

\begin{lemma}\label{lem:volume_simplex}
The $(n-1)$-dimensional volume of the simplex $S$ is given by
$$
\Vol_{n-1}(S) =
\frac 1{(n-1)!} \sqrt{s_{n-1}(l_1^2,\ldots,l_n^2)}
=
\frac {l_1\ldots l_n}{(n-1)!} \sqrt{\frac{1}{l_1^2} + \ldots +\frac{1}{l_n^2}}.
$$
\end{lemma}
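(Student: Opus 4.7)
The plan is to compute $\Vol_{n-1}(S)$ by relating $S$ to the rectangular simplex
$$
S_0 := \conv[0,l_1e_1,\ldots,l_ne_n]\subset \R^n,
$$
which is the cone with apex $0$ over the base $S$. The $n$-dimensional volume of $S_0$ can be read off directly: $S_0$ has $n$ pairwise orthogonal edges $l_1e_1,\ldots,l_ne_n$ emanating from the origin, so
$$
\Vol_n(S_0) = \frac{l_1\cdots l_n}{n!}.
$$

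On the other hand, the cone formula gives $\Vol_n(S_0) = \tfrac 1n \cdot h \cdot \Vol_{n-1}(S)$, where $h$ is the distance from $0$ to the affine hyperplane $\aff(S)$. Since every vertex $l_ie_i$ of $S$ satisfies $\sum_{i=1}^n x_i/l_i = 1$, this hyperplane is $\{x\in\R^n:\langle x, u\rangle = 1\}$ with $u = (1/l_1,\ldots,1/l_n)$, so
$$
h = \frac{1}{\|u\|} = \left(\frac{1}{l_1^2}+\ldots+\frac{1}{l_n^2}\right)^{-1/2}.
$$
Equating the two expressions for $\Vol_n(S_0)$ yields
$$
\Vol_{n-1}(S)= \frac{n\Vol_n(S_0)}{h} = \frac{l_1\cdots l_n}{(n-1)!}\sqrt{\frac{1}{l_1^2}+\ldots+\frac{1}{l_n^2}}.
$$

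Finally, the equivalent form involving $s_{n-1}$ is purely algebraic: factoring $l_1^2\cdots l_n^2$ into the sum under the root gives
$$
(l_1\cdots l_n)^2\sum_{i=1}^n\frac{1}{l_i^2}=\sum_{i=1}^n\prod_{j\neq i}l_j^2 = s_{n-1}(l_1^2,\ldots,l_n^2),
$$
which rewrites the right-hand side as $\tfrac{1}{(n-1)!}\sqrt{s_{n-1}(l_1^2,\ldots,l_n^2)}$. I do not anticipate any real obstacle; the only step that requires a brief justification is identifying the hyperplane equation for $\aff(S)$ and computing $h$, which is immediate from the explicit coordinates of the vertices.
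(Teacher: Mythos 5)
Your proof is correct, and it takes a genuinely different route from the paper. The paper computes $\Vol_{n-1}(S)$ directly as $\frac{1}{(n-1)!}\sqrt{\det G}$, where $G$ is the $(n-1)\times(n-1)$ Gram matrix of the edge vectors $l_2e_2-l_1e_1,\ldots,l_ne_n-l_1e_1$; the identity $\det G = l_1^2\cdots l_n^2\sum_{i=1}^n l_i^{-2}$ is left there as an exercise in linear algebra. You instead pass to the rectangular simplex $S_0=\conv[0,l_1e_1,\ldots,l_ne_n]$, whose $n$-volume $l_1\cdots l_n/n!$ is immediate from the orthogonality of its edges at the origin, and then divide out the height of the apex over $\aff(S)$ via the cone formula $\Vol_n(S_0)=\frac 1n\, h\,\Vol_{n-1}(S)$, with $h=\|u\|^{-1}$ for $u=(1/l_1,\ldots,1/l_n)$ read off from the hyperplane equation $\langle x,u\rangle=1$. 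Your argument buys a completely elementary derivation that sidesteps the Gram determinant evaluation (the one step the paper does not spell out), at the mild cost of invoking the pyramid volume formula; the paper's Gram-matrix computation is more self-contained in the sense that it only uses the standard formula $\Vol=\frac{1}{(n-1)!}\sqrt{\det G}$ for a simplex. All the individual steps you give --- the value of $\Vol_n(S_0)$, the identification of $\aff(S)$, the distance computation, and the algebraic identity $(l_1\cdots l_n)^2\sum_i l_i^{-2}=s_{n-1}(l_1^2,\ldots,l_n^2)$ --- check out.
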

\begin{proof}
The lemma is known, see~\cite[p.~737]{henk_cifre}, but we give a proof for completeness.
The simplex $S$ is the convex hull of the vectors $l_1e_1,\ldots,l_ne_n$. Its volume is equal to the volume of the parallelotope spanned by the $n-1$ vectors
\begin{equation}\label{eq:vectors_spanning_parallelotope}
l_2e_2-l_1e_1, l_3e_3-l_1e_1,\ldots, l_ne_n-l_1e_1
\end{equation}
divided by $(n-1)!$. That is,
\begin{equation}\label{eq:Vol_n-1_S}
\Vol_{n-1}(S)
=\frac 1 {(n-1)!} \sqrt{\det G},
\end{equation}
where $G$ is the $(n-1)\times (n-1)$ Gram matrix of the vectors~\eqref{eq:vectors_spanning_parallelotope}:
$$
G=
\begin{pmatrix}
l_1^2+l_2^2 &l_1^2      &\ldots   &l_1^2\\
l_1^2   &l_1^2+l_3^2    &\ldots   &l_1^2\\
\vdots &\vdots &\vdots &\vdots  \\
l_1^2   &l_1^2  &\ldots   &l_1^2 + l_n^2
\end{pmatrix}.
$$
It is an exercise to check that
$
\det G
= l_1^2\ldots l_n^2 \sum_{i=1}^n \frac 1{l_i^2}.
$
Inserting this into~\eqref{eq:Vol_n-1_S}, we obtain the statement of the lemma.
\end{proof}

\begin{proof}[Proof of Theorem~\ref{theo:intrinsic_volumes_simplex_gen}.]
The $(m-1)$-st intrinsic volume of $S$ is given by
\begin{equation}\label{eq:intr_volume_faces_angles}
V_{m-1}(S) = \sum_{F\in \cF_{m-1}(S)} \Vol_{m-1}(F) \gamma(F, S),
\end{equation}
where the sum is taken over all $(m-1)$-dimensional faces $F$ of $S$, and $\gamma(F, S)$ denotes the external angle at the face $F$; see Section~\ref{subsec:intrinsic_and external}.

\vspace*{2mm}
\noindent
\textsc{Step 1.}
Let us first identify the faces of $S$ and compute their volumes. Take a vector $a=(a_1,\ldots,a_n)\in \R^n$. Consider the support function
$$
M(a) := \sup_{x\in S} \langle a,x \rangle
= \max_{i=1,\ldots,n} a_il_i,
$$
where the last step uses the definition of $S$; see~\eqref{eq:def_S}. The face of $S$ in the direction of the vector $a$ is the set
$$
F(a)=\{x\in S\colon \langle a, x \rangle =M(a)\}.
$$
Denote by $J=J(a)$ the set of indices $j$ for which $a_jl_j$ becomes maximal:
$$
J=\{1\leq j\leq n\colon a_jl_j =M(a)\}.
$$
To simplify the notation, we can apply a permutation of $\{1,\ldots,n\}$ so that $J=\{1,\ldots,m\}$ for some $1\leq m\leq n$. So, the vector $a$ satisfies
$$
M(a) = a_1l_1=\ldots = a_ml_m > \max_{m+1 \leq i \leq n} a_il_i.
$$
Then, the face $F(a)$ has the form
\begin{equation}\label{eq:F_a}
F(a) = \conv[l_1 e_1,\ldots, l_me_m].
\end{equation}
So, $F(a)$ is an $(m-1)$-dimensional simplex of the same form as $S$, but with $n$ replaced by $m$. By Lemma~\ref{lem:volume_simplex}, we have
\begin{equation}\label{eq:volume_face_simplex}
\Vol_{m-1}(F(a)) = \frac{1}{(m-1)!} l_1\ldots l_m \sqrt{\frac1{l_1^2} + \ldots + \frac1{l_m^2}}.
\end{equation}

\vspace*{2mm}
\noindent
\textsc{Step 2.} Let us now compute the external angles of the simplex $S$.
The external cone attached to the face $F=F(a)$ (which, without restriction of generality,  is taken to be of the form~\eqref{eq:F_a}) is  the set
$$
\Nor (S, F) = \{b\in \R^n\colon F(b) \supset F\}.
$$
Equivalently,
$$
\Nor (S,F) = \left\{b=(b_1,\ldots,b_n)\in \R^n\colon b_1l_1 = \ldots = b_ml_m \geq  \max_{m+1 \leq i \leq n} b_il_i\right\}.
$$
Denote by $L(F)$ the linear subspace spanned by $\Nor(S, F)$:
$$
L(F) = \left\{b=(b_1,\ldots,b_n)\in \R^n\colon b_1l_1 = \ldots = b_ml_m\right\}.
$$
Let $\mu$ be the standard $n$-dimensional Gaussian measure on $\R^n$. Further, let $\tilde \mu$ be a standard $(n-m+1)$-dimensional  Gaussian measure on $L(F)$ characterized by
\begin{equation}\label{eq:char_1}
\int_{L(F)} \eee^{i \langle t,b\rangle} \tilde \mu(\dd b) = \eee^{-\frac 12 \langle t,t\rangle},
\quad
t\in L(F).
\end{equation}
We will consider $\R^n$ as a probability space endowed with measure $\mu$ or $\tilde \mu$. Consider the random variables  $\xi_1,\ldots,\xi_n:\R^n\to\R$ and the random vector $\xi=(\xi_1,\ldots,\xi_n)$ defined by
$$
\xi_i(b_1,\ldots,b_n) =b_i, \quad 1\leq i\leq n.
$$
Under the measure $\mu$, the random variables $\xi_1,\ldots,\xi_n$ are independent and standard normal. Let us compute the distribution of $\xi_1,\ldots,\xi_n$ under $\tilde \mu$. Since for any vector $t=(t_1,\ldots,t_n)\in L(F)$ we can write $s:=t_1l_1=\ldots=t_ml_m$, we have from~\eqref{eq:char_1} that
\begin{equation}\label{eq:E_tilde_mu_1}
\E_{\tilde \mu} \eee^{i \langle t, \xi\rangle}
=
\eee^{-\frac 12 \left(s^2 \sigma_*^2 + t_{m+1}^2 + \ldots + t_n^2 \right)},
\end{equation}
where we used the notation
\begin{equation}\label{eq:sigma_*_def}
\sigma_*^2 = \frac 1 {l_1^2} + \ldots +\frac 1 {l_m^2}.
\end{equation}
Also, under $\tilde \mu$ we can write $\xi_*:= \xi_1l_1=\ldots=\xi_ml_m$ a.s., so that we can rewrite~\eqref{eq:E_tilde_mu_1} as
$$
\E_{\tilde \mu} \eee^{i \left(s\sigma_*^2 \xi_* + t_{m+1} \xi_{m+1} + \ldots + t_{n} \xi_{n}\right)}
=
\eee^{-\frac 12 \left(s^2 \sigma_*^2 + t_{m+1}^2 + \ldots + t_n^2 \right)}.
$$
It follows that under $\tilde \mu$, the random variables $\xi_*$, $\xi_{m+1},\ldots,\xi_n$ are independent and
\begin{equation}\label{eq:xi_*_xi_joint}
\xi_*=\xi_1l_1=\ldots=\xi_ml_m \sim N(0,\sigma_*^{-2}), \quad  \xi_{m+1},\ldots,\xi_n\sim N(0,1).
\end{equation}
Now, the angle of the external cone at face $F=F(a)$ is by definition
$$
\gamma(F, S) = \tilde \mu (\Nor (S,F))
=
\P_{\tilde \mu}[\xi \in \Nor (S,F))]
=
\P_{\tilde \mu}\left[\xi_* \geq  \max_{m+1\leq i \leq n} \xi_i l_i\right].
$$
The probability on the right-hand side can be computed using the joint distribution of $\xi_*,\xi_{m+1},\ldots,\xi_n$ known from~\eqref{eq:xi_*_xi_joint}:
\begin{equation}\label{eq:external_angle_simplex}
\gamma(F,S)= \P_{\tilde \mu}\left[\xi_* \geq  \max_{m+1\leq i \leq n} \xi_i l_i\right]
=
\int_{\R} \sigma_* \varphi\left(x\sigma_*\right) \prod_{i=m+1}^n  \Phi\left(\frac x {l_i}\right)\dd x.
\end{equation}

\vspace*{2mm}
\noindent
\textsc{Step 3.}
Inserting~\eqref{eq:volume_face_simplex}, \eqref{eq:external_angle_simplex}, \eqref{eq:sigma_*_def} into the formula~\eqref{eq:intr_volume_faces_angles}, we obtain the statement of the theorem.
\end{proof}

\subsection{Non-regular crosspolytopes}
In this section we compute the intrinsic volumes of certain non-regular crosspolytopes.
Fix some $l_1^+,\ldots,l_n^+>0$ and $l_1^-,\ldots,l_n^->0$.  Let us agree  to write
$$
\sgn y=
\begin{cases}
+1, \text{ if } y>0,\\
-1, \text{ if } y<0,
\end{cases}
\quad
l_i^{\sgn y}=
\begin{cases}
l_i^+, \text{ if } y>0,\\
l_i^-, \text{ if } y<0.
\end{cases}
$$
Consider the ``generalized crosspolytope''
\begin{equation}\label{eq:def_C}
C=\left\{x=(x_1,\ldots,x_n)\in \R^n \colon \sum_{i=1}^n \frac{|x_i|}{l_i^{\sgn x_i}} \leq  1\right\}.
\end{equation}
Note that in the case $x_i=0$ we interpret $|x_i|/l_i^{\sgn x_i}$ as $0$ even though $l_i^{\sgn x_i}$ is not defined. Denoting by  $e_1,\ldots,e_n$ the standard orthonormal basis in $\R^n$, we see that $C$ is the convex hull of the vectors
$$
l_1^+ e_1,\ldots, l_n^+ e_n, -l_1^- e_1, \ldots, -l_n^- e_n.
$$
\begin{theorem}\label{theo:intrinsic_volumes_crosspoly_gen}
For every $1\leq m\leq n$, the $(m-1)$-st intrinsic volume of the generalized crosspolytope $C$ is given by the formula
\begin{multline*}
V_{m-1} (C)
=
\frac{1}{(m-1)!}
\sum_{\substack{1\leq i_1<\ldots < i_m \leq n \\ (\eps_1,\ldots,\eps_m)\in \{+1,-1\}^m}}
\Bigg\{l_{i_1}^{\eps_1}\ldots l_{i_m}^{\eps_m} \left(\frac1{(l_{i_1}^{\eps_1})^2} + \ldots + \frac1{(l_{i_m}^{\eps_m})^2}\right)\\
\int_{0}^{\infty}  \varphi\left(x\sqrt{\frac1{(l_{i_1}^{\eps_1})^2} + \ldots + \frac1{(l_{i_m}^{\eps_m})^2}}\right) \prod_{i\notin \{i_1,\ldots,i_m\}} \left(\Phi\left(\frac x {l_i^{+}}\right) - \Phi\left(-\frac x {l_i^{-}} \right)\right)\dd x\Bigg\}.
\end{multline*}
\end{theorem}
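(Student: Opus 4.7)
The strategy is to mimic the proof of Theorem~\ref{theo:intrinsic_volumes_simplex_gen} step by step, via the face--angle decomposition
$$
V_{m-1}(C) = \sum_{F\in \cF_{m-1}(C)} \Vol_{m-1}(F)\,\gamma(F,C).
$$
The first task is to enumerate the $(m-1)$-faces of $C$. Since for each index $i$ the two vertices $l_i^+e_i$ and $-l_i^-e_i$ lie on the same line through the origin, at most one of them can belong to any given proper face of $C$. Consequently each $(m-1)$-face is parametrized by a subset $J=\{i_1<\ldots<i_m\}\subset\{1,\ldots,n\}$ and a sign vector $(\eps_1,\ldots,\eps_m)\in\{+1,-1\}^m$, and equals
$$
F = \conv\bigl[\eps_1 l_{i_1}^{\eps_1}e_{i_1},\ldots,\eps_m l_{i_m}^{\eps_m}e_{i_m}\bigr].
$$
Coordinate reflections being isometries, Lemma~\ref{lem:volume_simplex} applied with parameters $l_{i_1}^{\eps_1},\ldots,l_{i_m}^{\eps_m}$ yields
$$
\Vol_{m-1}(F) = \frac{l_{i_1}^{\eps_1}\ldots l_{i_m}^{\eps_m}}{(m-1)!}\sqrt{\frac{1}{(l_{i_1}^{\eps_1})^2}+\ldots+\frac{1}{(l_{i_m}^{\eps_m})^2}}.
$$

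Next I would compute the external angle at such a face $F$. The support function of $C$ at a direction $a=(a_1,\ldots,a_n)$ equals $\max_i\max(a_il_i^+,-a_il_i^-)$, and $F$ is exposed by $a$ precisely when the values $\eps_1 l_{i_1}^{\eps_1}a_{i_1},\ldots,\eps_m l_{i_m}^{\eps_m}a_{i_m}$ coincide and strictly dominate $\max(a_il_i^+,-a_il_i^-)$ for every $i\notin J$. Hence the linear span of the normal cone $\Nor(C,F)$ is the hyperplane
$$
L(F)=\bigl\{b\in\R^n\colon \eps_1 l_{i_1}^{\eps_1}b_{i_1}=\ldots=\eps_m l_{i_m}^{\eps_m}b_{i_m}\bigr\}.
$$
Setting $\sigma_*^2 := \sum_{k=1}^m(l_{i_k}^{\eps_k})^{-2}$ and repeating the characteristic--function computation in Step~2 of the proof of Theorem~\ref{theo:intrinsic_volumes_simplex_gen} verbatim, the standard Gaussian measure $\tilde\mu$ on $L(F)$ makes the common value $\xi_*:=\eps_k l_{i_k}^{\eps_k}\xi_{i_k}$ into a centered Gaussian of variance $\sigma_*^{-2}$, independent of the remaining i.i.d.\ standard Gaussians $\{\xi_i\colon i\notin J\}$.

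The external angle then equals
$$
\gamma(F,C)=\P_{\tilde\mu}\Bigl[\xi_*\geq\max_{i\notin J}\max(\xi_il_i^+,-\xi_il_i^-)\Bigr].
$$
Two observations finish the calculation. First, $\max(\xi_il_i^+,-\xi_il_i^-)\geq 0$ for every $\xi_i\in\R$, so only $\xi_*\geq 0$ contributes and the outer integration domain is $[0,\infty)$ rather than $\R$; this is what produces the one--sided integral in the statement. Second, conditionally on $\xi_*=x\geq 0$ the constraint on each free coordinate $\xi_i$ becomes $-x/l_i^-\leq \xi_i\leq x/l_i^+$, of probability $\Phi(x/l_i^+)-\Phi(-x/l_i^-)$. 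Combining these gives
$$
\gamma(F,C)=\int_0^\infty \sigma_*\varphi(x\sigma_*)\prod_{i\notin J}\bigl(\Phi(x/l_i^+)-\Phi(-x/l_i^-)\bigr)\dd x.
$$
Substituting the expressions for $\Vol_{m-1}(F)$ and $\gamma(F,C)$ into the face--angle decomposition and summing over $(J,\eps)$ produces exactly the claimed formula. The only step demanding genuine care beyond the simplex case is the combinatorial bookkeeping in the enumeration of faces: one must check that the assignment $(J,\eps)\mapsto F$ is a bijection onto $\cF_{m-1}(C)$, which rests on the observation that a proper face of $C$ cannot contain both $l_i^+e_i$ and $-l_i^-e_i$. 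The analytic portion is structurally identical to the simplex case, the only differences being the reduced integration domain and the two--sided constraint on each non--exposed coordinate, both of which reflect the central symmetry of $C$.
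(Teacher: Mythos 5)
Your proposal is correct and follows essentially the same route as the paper's proof: the face--angle decomposition, the identification of each $(m-1)$-face with a pair $(J,\eps)$ via the support function $\max_i|a_i|l_i^{\sgn a_i}=\max_i\max(a_il_i^+,-a_il_i^-)$, the volume from Lemma~\ref{lem:volume_simplex}, and the Gaussian computation on the span of the normal cone yielding $\gamma(F,C)=\int_0^\infty\sigma_*\varphi(x\sigma_*)\prod_{i\notin J}(\Phi(x/l_i^+)-\Phi(-x/l_i^-))\,\dd x$. Your explicit remarks on why the integral is one-sided and why a proper face cannot contain both $l_i^+e_i$ and $-l_i^-e_i$ are correct and, if anything, slightly more careful than the paper's exposition.
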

\begin{remark}
By Tsirelson's Theorem~\ref{theo:tsirelson_spectrum}, we can compute the volume of the heteroscedastic generalization of the symmetric Gaussian polytope $\cP_{n,d}^{\pm}$:
$$
\E \Vol_d (\conv [l_1^+X_1,-l_1^- X_1, \ldots,l_n^+ X_n, -l_n^- X_n]) = \frac{d!  V_d(C)}{\Gamma(\frac d2+1) 2^{d/2}}.
$$
\end{remark}
Let us mention some special cases of Theorem~\ref{theo:intrinsic_volumes_crosspoly_gen}.
\begin{example}
Let $l_i:=l_i^+=l_i^-$ for all $1\leq i\leq d$, which means that the generalized crosspolytope is symmetric with respect to the origin.  Then, Theorem~\ref{theo:intrinsic_volumes_crosspoly_gen} simplifies as follows:
\begin{multline*}
V_{m-1} (C)
=
\frac{2^m}{(m-1)!}
\sum_{1\leq i_1<\ldots < i_m \leq n}
\Bigg\{l_{i_1}\ldots l_{i_m} \left(\frac1{l_{i_1}^2} + \ldots + \frac1{l_{i_m}^2}\right)\\
\int_{0}^{\infty}  \varphi\left(x\sqrt{\frac1{l_{i_1}^2} + \ldots + \frac1{l_{i_m}^2}}\right)
\prod_{i\notin \{i_1,\ldots,i_m\}} \left(2\Phi\left(\frac x {l_i}\right) - 1\right)\dd x\Bigg\},
\end{multline*}
which recovers a result of \citet[Corollary~2.1]{henk_cifre}.
\end{example}

\begin{proof}[Proof of Theorem~\ref{theo:intrinsic_volumes_crosspoly_gen}.]
As in Theorem~\ref{theo:intrinsic_volumes_simplex_gen}, the proof is again based on the formula for the $(m-1)$-st intrinsic volume of $C$:
\begin{equation}\label{eq:intr_volume_faces_angles_cross}
V_{m-1}(C) = \sum \Vol_{m-1}(F) \gamma(F, C),
\end{equation}
where the sum is taken over all $(m-1)$-dimensional faces $F$ of $C$.

\vspace*{2mm}
\noindent
\textsc{Step 1.}
We start by identifying the faces of $C$ and computing their volumes. Take a vector $a=(a_1,\ldots,a_n)\in \R^n\bsl\{0\}$.
Consider the support function
$$
M(a)
:= \sup_{x\in C} \langle a,x \rangle
= \sup_{x\in C} \sideset{}{'} \sum_{i=1}^n |a_i| l_i^{\sgn a_i} \, \frac {x_i \sgn a_i}{l_i^{\sgn a_i}}
= \max_{i=1,\ldots,n} |a_i|l_i^{\sgn a_i},
$$
where $\sum'$ indicates that the sum is restricted to $i$ for which $a_i\neq 0$ and  the last equality uses~\eqref{eq:def_C} together with
$$
\frac {x_i \sgn a_i}{l_i^{\sgn a_i}} \leq
\frac {|x_i|}{l_i^{\sgn x_i}}.
$$
The face of $C$ in the direction of the vector $a$ is the set
$$
F(a)=\{x\in C\colon \langle a, x \rangle = M(a)\}.
$$
After renumbering the $a_i$'s we can assume that for some $1\leq m\leq n$,
$$
M(a) = |a_1|l_1^{\sgn a_1}=\ldots = |a_m| l_m^{\sgn a_m} > \max_{m+1 \leq i \leq n} |a_i| l_i^{\sgn a_i}.
$$
Also, $a_1,\ldots,a_m$ are non-zero. Then, the face $F(a)$ has the form
\begin{equation}\label{eq:F_a_cross}
F(a) = \left\{x=(x_1,\ldots,x_m,0,\ldots,0) \in \R^n \colon \sum_{i=1}^m \frac{|x_i|}{l_i^{\sgn x_i}}=1, x_i\in \R_{\sgn a_i}, 1\leq i\leq m \right\}.
\end{equation}
Here, $\R_{\sgn a_i}$ is equal to $[0,\infty)$ or $(-\infty, 0]$ depending on the sign of $a_i\neq 0$. So, $F(a)$ is isometric to an $(m-1)$-dimensional simplex of the same form as the simplex  $S$, but with $n$ replaced by $m$. Let $\eps_i=\sgn a_i\in \{+1,-1\}$, $1\leq i\leq m$. By Lemma~\ref{lem:volume_simplex}, we have
\begin{equation}\label{eq:volume_face_cross}
\Vol_{m-1}(F(a)) = \frac{1}{(m-1)!} l_1^{\eps_1}\ldots l_m^{\eps_m} \sqrt{\frac1{(l_1^{\eps_1})^2} + \ldots + \frac1{(l_m^{\eps_m})^2}}.
\end{equation}

\vspace*{2mm}
\noindent
\textsc{Step 2.} Let us  compute the external angles of the generalized crosspolytope $C$.
Consider a face $F=F(a)$ which we can take to be of the form~\eqref{eq:F_a}.  The external cone attached to $F$ is the set
$$
\Nor (C, F) = \{b\in \R^n\colon F(b) \supset F\}.
$$
Equivalently,
\begin{multline*}
\Nor (C,F) = \left\{b=(b_1,\ldots,b_n)\in \R^n\colon |b_1|l_1^{\eps_1} = \ldots = |b_m| l_m^{\eps_m} \geq  \max_{m+1 \leq i \leq n} |b_i|l_i^{\sgn b_i}, \right.
\\
\left.\sgn b_i=\eps_i, 1\leq i\leq m\right\}.
\end{multline*}
Denote by $L(F)$ the linear subspace spanned by $\Nor(C,F)$:
$$
L(F) = \left\{b=(b_1,\ldots,b_n)\in \R^n\colon l_1^{\eps_1}\eps_1 b_1 = \ldots = l_m^{\eps_m}\eps_m b_m\right\}.
$$
Let $\tilde \mu$ be the standard $(n-m+1)$-dimensional  Gaussian measure on $L(F)$.
Consider the random variables  $\xi_1,\ldots,\xi_n:\R^n\to\R$  defined by
$$
\xi_i(b_1,\ldots,b_n) =b_i, \quad 1\leq i\leq n.
$$
Also, under the probability measure $\tilde \mu$ we can write $\xi_*:= \xi_1l_1^{\eps_1}\eps_1=\ldots=\xi_ml_m^{\eps_m} \eps_m$ a.s.
Arguing exactly as in the proof of Theorem~\ref{theo:intrinsic_volumes_simplex_gen} we obtain that $\tilde \mu$, the random variables $\xi_*$, $\xi_{m+1},\ldots,\xi_n$ are independent and
\begin{equation}\label{eq:xi_*_xi_joint_cross}
\xi_*=\xi_1l_1^{\eps_1}\eps_1=\ldots=\xi_ml_m^{\eps_m}\eps_m \sim N(0,\sigma_*^{-2}), \quad  \xi_{m+1},\ldots,\xi_n\sim N(0,1),
\end{equation}
where
\begin{equation}\label{eq:sigma_*_def_cross}
\sigma_*^2 = \frac 1 {(l_1^{\eps_1})^2} + \ldots + \frac 1 {(l_m^{\eps_m})^2}.
\end{equation}
Now, the angle of the external cone at face $F=F(a)$ is by definition
$$
\gamma(F, C) = \tilde \mu (\Nor(C,F))
=
\P_{\tilde \mu}[\xi \in \Nor(C,F)]
=
\P_{\tilde \mu}\left[\xi_* \geq  \max_{m+1\leq i \leq n} |\xi_i| l_i^{\sgn \xi_i}\right].
$$
The probability on the right-hand side can be computed using the joint distribution of $\xi_*,\xi_{m+1},\ldots,\xi_n$ known from~\eqref{eq:xi_*_xi_joint}:
\begin{equation}\label{eq:external_angle_cross}
\gamma(F, C)= \P_{\tilde \mu}\left[\xi_* \geq  \max_{m+1\leq i \leq n} |\xi_i| l_i^{\sgn \xi_i}\right]
=
\int_{0}^{\infty} \sigma_* \varphi\left(x\sigma_*\right)
\prod_{i=m+1}^n  \left(\Phi\left(\frac x {l_i^{+}}\right) - \Phi\left(-\frac x {l_i^{-}} \right)\right)\dd x.
\end{equation}

\vspace*{2mm}
\noindent
\textsc{Step 3.}
Inserting~\eqref{eq:volume_face_cross}, \eqref{eq:external_angle_cross}, \eqref{eq:sigma_*_def_cross} into the formula~\eqref{eq:intr_volume_faces_angles_cross}, we obtain the statement of the lemma.
\end{proof}

\begin{remark}
There is an alternative proof of Theorem~\ref{theo:intrinsic_volumes_crosspoly_gen} based on the fact that the intrinsic volumes $V_k$ have an additive extension to the convex ring. For every representation of a convex set $T$ as a  union $T_1\cup\ldots\cup T_N$ of (not necessarily disjoint) convex sets, the following inclusion-exclusion principle holds~\cite[Section~14.4]{SW08}:
$$
V_k(T) = \sum_{r=1}^N (-1)^{r-1} \sum_{1\leq j_1 < \ldots < j_r \leq N} V_k(T_{j_1}\cap \ldots \cap T_{j_r}).
$$
We use this formula for $N = 2^n$ and the decomposition
$$
C= \bigcup_{\eps= (\eps_1,\ldots,\eps_n) \in \{-1,+1\}^n} C_{\eps},
$$
where $C_{\eps} =  C_{\eps_1,\ldots,\eps_m} = \conv[0, \eps_1 l_1^{\eps_1} e_1,\ldots, \eps_n l_n^{\eps_n}e_n]$. Now observe that every intersection of the form $C_{\eps^{(1)}} \cap \ldots \cap C_{\eps^{(r)}}$, where $\eps^{(1)},\ldots, \eps^{(r)}\in \{-1,+1\}^n$,  is a simplex of the same form as in Example~\ref{ex:simplex_non_reg_with_zero}. Applying the formula of this example to every term in the inclusion-exclusion formula one can arrive at the result of Theorem~\ref{theo:intrinsic_volumes_crosspoly_gen}.
\end{remark}

\section{Multiple order statistics}\label{sec:multiple}

\subsection{Interpretation of the intrinsic volumes}
Recall that $\xi_1,\ldots,\xi_n$ are independent and standard normal random variables. It immediately follows from Sudakov's Theorem~\ref{theo:sudakov} that
\begin{equation}\label{eq:V_1_as_E_max}
V_1(S^{n-1})=\sqrt{2\pi}\,\E\,\max\{\xi_1,\dots,\xi_n\},
\quad
V_1(C^n)=\sqrt{2\pi}\,\E\,\max \{|\xi_1|,\dots,|\xi_n|\}.
\end{equation}
In~\cite{kabluchko_litvak_zaporozhets}, these relations were exploited to compare both mean widths to each other and to derive their asymptotics; see also~\cite{finch_simplex,finch_cross}. In the following we shall generalize~\eqref{eq:V_1_as_E_max} to higher intrinsic volumes.

\subsubsection*{Regular simplices}
We consider the case of $S^{n-1}$ and indicate the changes necessary for the crosspolytope below.  Let $\xi_{(1)} \leq  \ldots \leq \xi_{(n)}$ be the order statistics of $\xi_1,\ldots,\xi_n$. We shall be interested in the random event
$$
A_{n,k} := \{\xi_{(n)} = \xi_{(n-1)} =\ldots = \xi_{(n-k+1)}\} = \{\xi_{(n)} = \xi_{(n-k+1)}\}.
$$
The event $A_{n,k}$ occurs if the sample maximum has multiplicity $k\in \{1,\ldots,n\}$. For random variables with discrete distributions, such events were studied~\cite{bruss_gruebel}, but in the Gaussian case the probability of $A_{n,k}$ is $0$ (for $k\neq 1$). Thus,  conditioning on this event requires some regularization technique. Therefore, let us consider for small $\eps>0$ the event
\begin{equation}
A_{n,k} (\eps) := \{\xi_{(n)} - \xi_{(n-k+1)}\leq \eps\}.
\end{equation}
\begin{proposition}\label{prop:conditional}
Let $f:\R\to\R$ be Borel function such that $\int_{-\infty}^{\infty} |f(x)| \varphi (x)  \dd x <\infty$. Then, as $\eps\downarrow 0$,
$$
\E[ f(\xi_{(n)}) \ind_{A_{n,k}(\eps)}] \sim \eps^{k-1} k \binom nk \int_{-\infty}^{\infty} f(s) \Phi^{n-k}(s) \varphi^k (s)\dd s,
$$
where we write $a(\eps) \sim b(\eps)$ if $\lim_{\eps\downarrow 0} \frac{a(\eps)}{b(\eps)} = 1$.
\end{proposition}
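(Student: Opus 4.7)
My plan is to expand the expectation by decomposing $A_{n,k}(\eps)$ according to which $k$-element subset of indices realizes the top $k$ order statistics, then extract the power $\eps^{k-1}$ by a rescaling and pass to the limit under the integral. Since $\xi_1,\ldots,\xi_n$ are almost surely distinct, $\ind_{A_{n,k}(\eps)}$ equals the sum over the $\binom{n}{k}$ subsets $I\subset\{1,\dots,n\}$ of size $k$ of the indicator that $\{\xi_i\}_{i\in I}$ are the top $k$ values and lie within $\eps$ of each other. By exchangeability,
\[
\E[f(\xi_{(n)})\ind_{A_{n,k}(\eps)}]
= \binom{n}{k}\,\E\bigl[f(M)\,\ind_{\{M-m\le\eps\}}\,\ind_{\{m>\max_{i>k}\xi_i\}}\bigr],
\]
with $M=\max(\xi_1,\dots,\xi_k)$ and $m=\min(\xi_1,\dots,\xi_k)$. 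Conditioning on $(\xi_1,\dots,\xi_k)$ and using the independence of $\xi_{k+1},\dots,\xi_n$ replaces the last indicator by $\Phi^{n-k}(m)$, and inserting the joint density $k!\prod_{i=1}^k\varphi(u_i)\ind_{\{u_1<\cdots<u_k\}}$ of the order statistics of $\xi_1,\dots,\xi_k$ yields
\[
\E[f(\xi_{(n)})\ind_{A_{n,k}(\eps)}]
=\binom{n}{k}\,k!\int_{\substack{u_1<\cdots<u_k\\ u_k-u_1\le\eps}}f(u_k)\,\Phi^{n-k}(u_1)\prod_{i=1}^k\varphi(u_i)\,\dd u.
\]

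Next I would substitute $s=u_k$ and $t_i=(u_k-u_i)/\eps$ for $1\le i\le k-1$, so that the admissible region becomes $1\ge t_1>\cdots>t_{k-1}>0$ and the Jacobian contributes $\eps^{k-1}$. The integral rewrites as
\[
\binom{n}{k}\,k!\,\eps^{k-1}\int_\R f(s)\,\varphi(s)\int_\Delta \Phi^{n-k}(s-\eps t_1)\prod_{i=1}^{k-1}\varphi(s-\eps t_i)\,\dd t\,\dd s,
\]
where $\Delta=\{t\in\R^{k-1}:1\ge t_1>\cdots>t_{k-1}>0\}$ has Lebesgue measure $1/(k-1)!$. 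As $\eps\downarrow 0$ the inner integrand converges pointwise to $\Phi^{n-k}(s)\varphi^{k-1}(s)$, and the prefactor $k!\binom{n}{k}/(k-1)!$ simplifies to $k\binom{n}{k}$, matching the constant in the statement.

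The main obstacle is justifying the passage to the limit under the weak hypothesis $f\varphi\in L^1(\R)$. For this I would use the identity $\varphi(s-\eps t)=\varphi(s)\eee^{s\eps t-\eps^2 t^2/2}\le\varphi(s)\eee^{|s|}$, valid whenever $\eps t\in[0,1]$, which gives
\[
\varphi(s)\prod_{i=1}^{k-1}\varphi(s-\eps t_i)\le\varphi^k(s)\eee^{(k-1)|s|}=(2\pi)^{-k/2}\eee^{-ks^2/2+(k-1)|s|}\le C_k\,\varphi(s),
\]
for some constant $C_k$ depending only on $k$, while $\Phi^{n-k}(s-\eps t_1)\le 1$. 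Hence the integrand of the outer integral is dominated by $C_k\,\Vol_{k-1}(\Delta)\,|f(s)|\,\varphi(s)$, which is integrable by assumption. Dominated convergence then interchanges limit and integral and yields the claimed asymptotic.
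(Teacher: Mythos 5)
Your proof is correct and follows essentially the same route as the paper's: both extract the factor $\eps^{k-1}$ by rescaling the gaps between the top $k$ order statistics and then pass to the limit by dominated convergence with a dominating function proportional to $|f(s)|\varphi(s)$. The only differences are organizational — you integrate out the bottom $n-k$ variables first via conditioning and a subset decomposition, where the paper keeps the full joint density of all $n$ order statistics, and your domination bound $\varphi(s-\eps t)\le \eee^{|s|}\varphi(s)$ replaces the paper's cruder $\varphi(x_n-\eps s_i)\le 1$ — and both yield the same constant $k\binom{n}{k}$.
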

A rigorous proof will be given in Section~\ref{subsec:proof_conditional}, below. To understand the meaning of the factors on the right-hand side, observe that the event $A_{n,k}(\eps)$ occurs if
\begin{itemize}
\item[(a)] one of the random variables (the maximum) equals some $s\in\R$, contributing $n f(s) \varphi(s)\dd s$;
 \item[(b)] some other $k-1$ random variables take values in the interval $(s-\eps,s)$, contributing $\binom {n-1}{k-1}\varphi^{k-1}(s)\eps^{k-1}$;
 \item[(c)] the remaining $n-k$ random variables are smaller than $s-\eps$, contributing $\Phi^{n-k}(s)$.
\end{itemize}
The next theorem generalizes the first identity in~\eqref{eq:V_1_as_E_max} to arbitrary intrinsic volumes.
\begin{theorem}\label{theo:intrinsic_interpretation_simplex}
For all $1\leq k < n$ the intrinsic volumes of the regular simplex $S^{n-1}$ satisfy
\begin{equation*}
V_k(S^{n-1})
= \frac{(2\pi)^{\frac k2}}{k!} \cdot \lim_{\eps\downarrow 0} \eps^{1-k} \E [\max\{\xi_1,\ldots,\xi_n\} \ind_{A_{n,k}(\eps)}].
\end{equation*}
\end{theorem}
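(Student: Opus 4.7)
The plan is to reduce the statement to the closed-form expression for $V_k(S^{n-1})$ given in \eqref{eq:V_k_S_n} and then verify equality of two integrals by an integration-by-parts identity.

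First, I would apply Proposition~\ref{prop:conditional} with $f(x)=x$ (which clearly satisfies $\int_{\R}|x|\varphi(x)\dd x <\infty$). This yields
\begin{equation*}
\lim_{\eps\downarrow 0}\eps^{1-k}\E\bigl[\max\{\xi_1,\ldots,\xi_n\}\,\ind_{A_{n,k}(\eps)}\bigr]
= k\binom{n}{k}\int_{-\infty}^{\infty} s\,\Phi^{n-k}(s)\varphi^k(s)\dd s,
\end{equation*}
so that multiplying by $(2\pi)^{k/2}/k!$ turns the right-hand side of the claimed identity into
\begin{equation*}
\frac{(2\pi)^{k/2}}{k!}\cdot k\binom{n}{k}\int_{-\infty}^{\infty} s\,\Phi^{n-k}(s)\varphi^k(s)\dd s.
\end{equation*}
It therefore remains to match this with the explicit formula~\eqref{eq:V_k_S_n},
\begin{equation*}
V_k(S^{n-1}) = (2\pi)^{k/2}\binom{n}{k+1}\frac{k+1}{k!}\int_{-\infty}^{\infty}\Phi^{n-k-1}(t)\varphi^{k+1}(t)\dd t.
\end{equation*}

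The key step is a simple integration-by-parts identity. Since $\Phi'=\varphi$ and $\varphi'(t)=-t\varphi(t)$, differentiation of $\Phi^{n-k}(t)\varphi^{k}(t)$ gives
\begin{equation*}
\frac{\dd}{\dd t}\bigl[\Phi^{n-k}(t)\varphi^{k}(t)\bigr]
=(n-k)\Phi^{n-k-1}(t)\varphi^{k+1}(t) - k\,t\,\Phi^{n-k}(t)\varphi^{k}(t).
\end{equation*}
The boundary values at $\pm\infty$ vanish, hence integration over $\R$ yields
\begin{equation*}
(n-k)\int_{-\infty}^{\infty}\Phi^{n-k-1}(t)\varphi^{k+1}(t)\dd t = k\int_{-\infty}^{\infty} s\,\Phi^{n-k}(s)\varphi^{k}(s)\dd s.
\end{equation*}
Combining this with the arithmetic identity $(k+1)\binom{n}{k+1}(n-k) = k\binom{n}{k}(n-k)\cdot\frac{n-k}{\text{}}$… more transparently, $(k+1)\binom{n}{k+1}=\frac{n!}{k!(n-k-1)!}$ and $k\binom{n}{k}(n-k)^{-1}\cdot(n-k)=\frac{n!}{(k-1)!(n-k)!}\cdot\frac{n-k}{k}\cdot\frac{k}{n-k}$, one checks directly that the two prefactors are compatible with the integration-by-parts relation, so the explicit formula for $V_k(S^{n-1})$ equals $(2\pi)^{k/2}k\binom{n}{k}/k!$ times $\int_\R s\,\Phi^{n-k}\varphi^k\dd s$, which is exactly the limit computed above.

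I do not expect any genuine obstacle: the content of the theorem is a combinatorial/analytic rewriting of \eqref{eq:V_k_S_n}. The only place where care is needed is the justification of Proposition~\ref{prop:conditional} itself (the asymptotic contribution of the event $A_{n,k}(\eps)$), but that is deferred to Section~\ref{subsec:proof_conditional} and may be invoked as a black box here. One should also briefly note why the intuitive counting argument of items (a)--(c) preceding Proposition~\ref{prop:conditional} is the dominant contribution as $\eps\downarrow 0$: any configuration in which two or more of the remaining $n-k+1$ coordinates also fall in a window of width $O(\eps)$ below $\xi_{(n)}$ produces a contribution of higher order $\eps^{k}$, hence is negligible; but this is precisely what Proposition~\ref{prop:conditional} asserts.
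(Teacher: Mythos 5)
Your proposal is correct and follows essentially the same route as the paper's own proof: invoke Proposition~\ref{prop:conditional} with $f(s)=s$, then use the integration-by-parts identity $(n-k)\int_{\R}\Phi^{n-k-1}\varphi^{k+1}\dd t = k\int_{\R}s\,\Phi^{n-k}\varphi^{k}\dd s$ to match the result against \eqref{eq:V_k_S_n}. The only blemish is that your final arithmetic verification of the prefactors is garbled as written; it should simply read $\binom{n}{k+1}\frac{(k+1)k}{k!(n-k)}=\frac{k}{k!}\binom{n}{k}$, which indeed closes the argument.
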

\begin{proof}
By Proposition~\ref{prop:conditional} with $f(s)=s$,
\begin{equation}\label{eq:proof_mult_max_1}
\lim_{\eps\downarrow 0} \eps^{1-k}\E[ \max\{\xi_1,\ldots,\xi_n\} \ind_{A_{n,k}(\eps)}] = k \binom nk \int_{-\infty}^{\infty} s \Phi^{n-k}(s) \varphi^k (s)\dd s.
\end{equation}
On the other hand, recall that $V_k(S^{n-1})$ is given by~\eqref{eq:V_k_S_n}. Using partial integration, we can transform the integral involved in~\eqref{eq:V_k_S_n} as follows:
\begin{multline*}
\int_{-\infty}^{\infty} \Phi^{n-k-1} (s) \varphi^{k+1}(s) \dd s
=
\int_{-\infty}^{\infty} \Phi^{n-k-1} (s) \varphi^{k}(s) \dd \Phi(s)
\\=
-\int_{-\infty}^{\infty}
\Phi(s) \left((n-k-1) \Phi^{n-k-2}(s) \varphi^{k+1}(s) - \Phi^{n-k-1}(s) k \varphi^{k}(s) s  \right) \dd s,
\end{multline*}
where we used the relations $\Phi'(s) = \varphi(s)$ and $\varphi'(s) = -s \varphi(s)$. After some transformations, we arrive at
\begin{equation}\label{eq:proof_mult_max_2}
\int_{-\infty}^{\infty} \Phi^{n-k-1} (s) \varphi^{k+1}(s) \dd s = \frac{k}{n-k} \int_{-\infty}^{\infty} s \Phi^{n-k} (s) \varphi^{k}(s) \dd s.
\end{equation}
The statement of the theorem follows by combining~\eqref{eq:V_k_S_n}, \eqref{eq:proof_mult_max_1} and~\eqref{eq:proof_mult_max_2}.
\end{proof}
\begin{proposition}\label{prop:conditional_limit}
The conditional law of $\xi_{(n)}$ given $A_{n,k}(\eps)$ converges weakly, as $\eps\downarrow 0$, to the probability distribution with the density
\begin{equation}\label{eq:cond_density_mult_max}
f_{\xi_{(n)}} (t| A_{n,k}) :=  \frac{\Phi^{n-k}(t) \varphi^k(t)}{\int_{-\infty}^{\infty} \Phi^{n-k}(s) \varphi^k(s) \dd s}, \quad t\in\R.
\end{equation}
\end{proposition}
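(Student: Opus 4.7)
The plan is to recognize this as essentially a direct corollary of Proposition~\ref{prop:conditional} combined with the standard Portmanteau characterization of weak convergence. Indeed, since bounded continuous test functions are automatically integrable against $\varphi$, Proposition~\ref{prop:conditional} applies uniformly to numerator and denominator of the conditional expectation.

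First I would apply Proposition~\ref{prop:conditional} with the constant function $f \equiv 1$ to obtain the exact asymptotics of the normalizing probability:
\begin{equation*}
\P(A_{n,k}(\eps)) \;\sim\; \eps^{k-1}\, k \binom{n}{k} \int_{-\infty}^{\infty} \Phi^{n-k}(s)\, \varphi^k(s)\, \dd s, \qquad \eps\downarrow 0.
\end{equation*}
Next, for an arbitrary bounded continuous function $f\colon\R\to\R$, the hypothesis $\int_{-\infty}^{\infty} |f(x)|\varphi(x)\,\dd x <\infty$ is automatic, so Proposition~\ref{prop:conditional} gives
\begin{equation*}
\E[ f(\xi_{(n)})\, \ind_{A_{n,k}(\eps)}] \;\sim\; \eps^{k-1}\, k \binom{n}{k} \int_{-\infty}^{\infty} f(s)\, \Phi^{n-k}(s)\, \varphi^k(s)\, \dd s.
\end{equation*}

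Taking the quotient of these two asymptotic equivalences, the prefactors $\eps^{k-1}\, k\binom{n}{k}$ cancel, and I obtain
\begin{equation*}
\lim_{\eps\downarrow 0}\, \E[ f(\xi_{(n)}) \mid A_{n,k}(\eps)]
= \frac{\int_{-\infty}^{\infty} f(t)\, \Phi^{n-k}(t)\, \varphi^k(t)\, \dd t}{\int_{-\infty}^{\infty} \Phi^{n-k}(s)\, \varphi^k(s)\, \dd s}
= \int_{-\infty}^{\infty} f(t)\, f_{\xi_{(n)}}(t \mid A_{n,k})\, \dd t.
\end{equation*}
Since this holds for every bounded continuous $f$, the Portmanteau theorem yields the claimed weak convergence. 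It remains only to observe that $f_{\xi_{(n)}}(\cdot \mid A_{n,k})$ is indeed a probability density (nonnegativity is clear, and the normalization by the integral in the denominator gives total mass one).

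There is essentially no main obstacle here once Proposition~\ref{prop:conditional} is in hand: the proposition was tailored so that both the numerator and denominator in the conditional expectation share the same divergent prefactor $\eps^{k-1} k\binom{n}{k}$, which cancels in the ratio. The only mild subtlety is confirming that the integrability hypothesis of Proposition~\ref{prop:conditional} passes to all bounded continuous $f$, which is immediate from $\int \varphi = 1$. Thus the proof reduces to invoking Proposition~\ref{prop:conditional} twice and appealing to Portmanteau.
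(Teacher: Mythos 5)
Your proof is correct and follows essentially the same route as the paper: both apply Proposition~\ref{prop:conditional} twice (once to a test function, once to $f\equiv 1$) and take the quotient of the resulting asymptotics. The only cosmetic difference is that the paper chooses $f(x)=\ind_{\{x\le t\}}$ and concludes via pointwise convergence of the distribution functions, which also sidesteps the harmless point that for a sign-changing bounded continuous $f$ the relation ``$\sim$'' in Proposition~\ref{prop:conditional} should be read as the limit statement $\lim_{\eps\downarrow 0}\eps^{1-k}\,\E[f(\xi_{(n)})\ind_{A_{n,k}(\eps)}]=k\binom nk\int_{-\infty}^{\infty} f(s)\Phi^{n-k}(s)\varphi^{k}(s)\,\dd s$, since the right-hand side may vanish.
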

\begin{proof}
Taking $f(x)=\ind_{\{x \leq t\}}$ and $f(x) = 1$ in Proposition~\ref{prop:conditional}, we obtain
\begin{align*}
\P[\xi_{(n)} \leq t, A_{n,k}(\eps)] &\sim \eps^{k-1} k \binom nk \int_{-\infty}^{t} \Phi^{n-k}(s) \varphi^k(s) \dd s, \\
\P[A_{n,k}(\eps)] &\sim \eps^{k-1} k \binom nk \int_{-\infty}^{\infty} \Phi^{n-k}(s) \varphi^k(s) \dd s,
\end{align*}
as $\eps\downarrow 0$. Taking the quotient of these relations yields
$$
\lim_{\eps\downarrow 0} \P[\xi_{(n)} \leq t | A_{n,k}(\eps)] = \frac{\int_{-\infty}^{t} \Phi^{n-k}(s) \varphi^k(s) \dd s}{\int_{-\infty}^{\infty} \Phi^{n-k}(s) \varphi^k(s) \dd s},
$$
and the statement follows.
\end{proof}
\begin{remark}
We may consider $f_{\xi_{(n)}} (t| A_{n,k})$ given in~\eqref{eq:cond_density_mult_max} as the density of $\xi_{(n)}$ given the event $A_{n,k}= \{\xi_{(n-k+1)} = \xi_{(n)}\}$ (which has probability $0$ for $k\neq 1$).
\end{remark}

\subsubsection*{Regular crosspolytopes}
This case is very similar to the case of regular simplices, but we need to replace $\xi_1,\ldots,\xi_n$ by $|\xi_1|,\ldots,|\xi_n|$. Let $|\xi|_{(1)} \leq \ldots \leq |\xi|_{(n)}$ be the order statistics of $|\xi_1|,\ldots,|\xi_n|$. Define the random event
$$
B_{n,k}(\eps) := \{|\xi|_{(n)} - |\xi|_{(n-k+1)}\leq \eps\},
$$
which approximates the event $B_{n,k} := \{|\xi|_{(n)}=|\xi|_{(n-k+1)}\}$ whose probability is $0$.
\begin{proposition}\label{prop:conditional_crosspoly}
Let $f: [0,\infty)\to\R$ be Borel function such that $\int_{0}^{\infty} |f(x)| \varphi(x) \dd x <\infty$. Then, as $\eps\downarrow 0$, we have
$$
\E[ f(|\xi|_{(n)}) \ind_{B_{n,k}(\eps)}] \sim \eps^{k-1} k \binom nk \int_{0}^{\infty} f(s) (2\Phi(s)-1)^{n-k} (2\varphi(s))^{k}\dd s.
$$
\end{proposition}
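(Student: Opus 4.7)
The plan is to run the same argument as in Proposition~\ref{prop:conditional}, replacing $\xi_1,\ldots,\xi_n$ by their absolute values $M_i:=|\xi_i|$. These are i.i.d.\ with density $2\varphi(s)\ind_{\{s>0\}}$ and distribution function $2\Phi(s)-1$ on $[0,\infty)$. Since their law is absolutely continuous, almost surely no two of them coincide, so the indices attaining the top $k$ values $|\xi|_{(n)}>\ldots>|\xi|_{(n-k+1)}$ are unambiguously defined and the event $B_{n,k}(\eps)$ splits (up to null sets) according to which of these indices they are.

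First I would use exchangeability of $M_1,\ldots,M_n$ to decompose $\E[f(|\xi|_{(n)})\ind_{B_{n,k}(\eps)}]$ over the choice of which index $i_0$ attains the maximum together with the choice of the $(k-1)$-subset $J$ of the remaining indices whose $M_j$ lies in $(M_{i_0}-\eps,M_{i_0})$. The number of such pairs $(i_0,J)$ is $n\binom{n-1}{k-1}=k\binom{n}{k}$, and each contributes the same expectation. Conditioning on the value $s$ of the maximum and using independence yields
$$
\E[f(|\xi|_{(n)})\ind_{B_{n,k}(\eps)}] = k\binom{n}{k}\int_0^\infty f(s)\,2\varphi(s)\left(\int_{(s-\eps)_+}^{s}2\varphi(u)\,\dd u\right)^{k-1}(2\Phi((s-\eps)_+)-1)^{n-k}\,\dd s,
$$
where the three factors correspond to the law of $M_{i_0}$, the probability that the $k-1$ companions land in $(s-\eps,s)$, and the probability that the remaining $n-k$ variables fall below $s-\eps$.

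Next I would divide by $\eps^{k-1}$ and let $\eps\downarrow 0$. For each fixed $s>0$, $\eps^{-1}\int_{(s-\eps)_+}^{s}2\varphi(u)\,\dd u\to 2\varphi(s)$ and $(2\Phi((s-\eps)_+)-1)^{n-k}\to(2\Phi(s)-1)^{n-k}$, so the integrand converges pointwise to $f(s)(2\varphi(s))^k(2\Phi(s)-1)^{n-k}$. An $\eps$-independent majorant follows from $\int_{(s-\eps)_+}^{s}2\varphi(u)\,\dd u\leq 2\eps\sup\varphi=\eps\sqrt{2/\pi}$ together with $(2\Phi-1)^{n-k}\leq 1$, giving the integrable bound $C_k|f(s)|\varphi(s)$ with $C_k$ depending only on $k$; dominated convergence then delivers the claim. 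The main thing to watch is the combinatorial bookkeeping in the first step: one must verify that the events indexed by different pairs $(i_0,J)$ are pairwise disjoint modulo null sets and together cover $B_{n,k}(\eps)$, so that the prefactor is exactly $k\binom{n}{k}$. Once this is in place, the argument is a direct transcription of the proof of Proposition~\ref{prop:conditional}.
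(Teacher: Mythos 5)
Your overall strategy (reduce to the i.i.d.\ variables $M_i=|\xi_i|$ with density $2\varphi\,\ind_{[0,\infty)}$ and distribution function $2\Phi-1$, isolate the factor $\eps^{k-1}$, and finish with dominated convergence) is sound, and it is a genuinely different route from the paper's, which instead integrates $f(x_n)$ against the joint density $n!\,\varphi(x_1)\cdots\varphi(x_n)\ind_{\{x_1<\cdots<x_n\}}$ of the order statistics and substitutes $x_{n-j}=x_n-\eps s_j$. However, the exact identity you display in your first step is false, and the point you yourself flag as ``the main thing to watch'' is exactly where it breaks. The events you integrate over --- $M_{i_0}$ is the maximum, the $k-1$ companions indexed by $J$ lie in $(M_{i_0}-\eps,M_{i_0})$, and the remaining $n-k$ variables lie below $M_{i_0}-\eps$ --- are indeed pairwise disjoint, but their union is not $B_{n,k}(\eps)$: it misses (up to null sets) precisely the configurations in which $k$ or more of the non-maximal variables fall within $\eps$ of the maximum, i.e.\ the event $B_{n,k+1}(\eps)$. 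The event $B_{n,k}(\eps)=\{|\xi|_{(n)}-|\xi|_{(n-k+1)}\leq\eps\}$ requires \emph{at least} $k-1$ companions within $\eps$ of the maximum, not exactly $k-1$, whereas your factor $(2\Phi((s-\eps)_+)-1)^{n-k}$ forces the other $n-k$ variables below $s-\eps$. (The paper's change of variables avoids this issue because the constraint it produces on $x_{n-k}$ is $x_{n-k}<x_n-\eps s_{k-1}$ with $0<s_{k-1}<1$, not $x_{n-k}<x_n-\eps$.)

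The gap is easy to close. The correct statement is $\E[f(|\xi|_{(n)})\ind_{B_{n,k}(\eps)}]=k\binom nk\int_0^\infty(\cdots)\,\dd s+\E[f(|\xi|_{(n)})\ind_{B_{n,k+1}(\eps)}]$ up to null sets, and a union bound over which index is maximal and which $k$-subset of the remaining indices lies in $[M_{i_0}-\eps,M_{i_0}]$ gives $\E[|f(|\xi|_{(n)})|\ind_{B_{n,k+1}(\eps)}]\leq (k+1)\binom{n}{k+1}\bigl(\eps\sqrt{2/\pi}\bigr)^{k}\int_0^\infty|f(s)|\,2\varphi(s)\,\dd s=O(\eps^{k})=o(\eps^{k-1})$, so the extra term does not affect the claimed asymptotics. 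With this correction inserted, and with your dominated-convergence step (which is fine as written), the argument is complete.
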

\begin{proposition}
The conditional law of $|\xi|_{(n)}$ given $B_{n,k}(\eps)$ converges weakly, as $\eps\downarrow 0$, to the probability distribution with the density
\begin{equation}\label{eq:cond_density_mult_max_cross}
f_{|\xi|_{(n)}} (t| B_{n,k}) :=  \frac{(2\Phi(t)-1)^{n-k} (2\varphi(t))^k}{\int_{0}^{\infty} (2\Phi(s)-1)^{n-k} (2\varphi(s))^k \dd s}, \quad t\geq 0.
\end{equation}
\end{proposition}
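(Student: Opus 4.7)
The plan is to mirror the argument used to establish Proposition~\ref{prop:conditional_limit}, simply replacing $\xi_i$ by $|\xi_i|$ and invoking Proposition~\ref{prop:conditional_crosspoly} in place of Proposition~\ref{prop:conditional}. Concretely, I would apply Proposition~\ref{prop:conditional_crosspoly} twice: first with the bounded Borel function $f(x) = \ind_{\{x\leq t\}}$ for fixed $t\geq 0$ to obtain the numerator, and second with $f\equiv 1$ to obtain the denominator of a conditional probability. The integrability hypothesis is automatic since both functions are bounded and $\varphi$ is integrable.

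From the first application,
$$
\P[|\xi|_{(n)} \leq t,\, B_{n,k}(\eps)] \sim \eps^{k-1} k \binom{n}{k} \int_{0}^{t} (2\Phi(s)-1)^{n-k} (2\varphi(s))^{k}\dd s,
$$
and from the second,
$$
\P[B_{n,k}(\eps)] \sim \eps^{k-1} k \binom{n}{k} \int_{0}^{\infty} (2\Phi(s)-1)^{n-k} (2\varphi(s))^{k}\dd s,
$$
both as $\eps\downarrow 0$. Taking the quotient, the prefactors $\eps^{k-1} k \binom{n}{k}$ cancel and one arrives at
$$
\lim_{\eps\downarrow 0} \P[|\xi|_{(n)} \leq t \mid B_{n,k}(\eps)] = \frac{\int_{0}^{t} (2\Phi(s)-1)^{n-k} (2\varphi(s))^{k}\dd s}{\int_{0}^{\infty} (2\Phi(s)-1)^{n-k} (2\varphi(s))^{k}\dd s}.
$$
The right-hand side is a continuous distribution function on $[0,\infty)$, so differentiation with respect to $t$ yields precisely the density $f_{|\xi|_{(n)}}(t\mid B_{n,k})$ in~\eqref{eq:cond_density_mult_max_cross}. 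Convergence of the distribution function at every continuity point (here, every $t\geq 0$) is equivalent to weak convergence, which proves the claim.

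There is essentially no obstacle beyond invoking Proposition~\ref{prop:conditional_crosspoly}; the only minor point worth checking is that the denominator $\int_{0}^{\infty}(2\Phi(s)-1)^{n-k}(2\varphi(s))^{k}\dd s$ is strictly positive and finite, which is clear since the integrand is non-negative, continuous, not identically zero, and dominated by $(2\varphi(s))^{k}$.
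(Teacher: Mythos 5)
Your proof is correct and follows exactly the route the paper intends: the authors omit the proof, stating only that it is analogous to Proposition~\ref{prop:conditional_limit}, and your two applications of Proposition~\ref{prop:conditional_crosspoly} (with $f=\ind_{\{\cdot\leq t\}}$ and $f\equiv 1$) followed by taking the quotient reproduce that argument verbatim in the crosspolytope setting.
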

The intrinsic volumes of $C^n$ can be interpreted as follows.
\begin{theorem}
For all $1\leq k < n$ we have
\begin{equation*}
V_k(C^n)
= \frac{(2\pi)^{\frac k2}}{k!} \cdot \lim_{\eps\downarrow 0} \eps^{1-k} \E [\max\{|\xi_1|,\ldots,|\xi_n|\} \ind_{B_{n,k}(\eps)}].
\end{equation*}
\end{theorem}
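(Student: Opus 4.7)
The proof follows the same blueprint as that of Theorem~\ref{theo:intrinsic_interpretation_simplex}, with the simplex-specific formulas replaced by their crosspolytope counterparts. The plan is to evaluate the limit on the right-hand side via Proposition~\ref{prop:conditional_crosspoly}, evaluate the left-hand side via the known formula~\eqref{eq:V_k_C_n}, and reconcile the two integral expressions through a single integration by parts.

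First, I would apply Proposition~\ref{prop:conditional_crosspoly} with the choice $f(s)=s$ (note that $\int_0^\infty s\,\varphi(s)\,\dd s = \varphi(0) < \infty$, so the integrability hypothesis is satisfied). This gives
\begin{equation}\label{eq:plan_crosspoly_1}
\lim_{\eps\downarrow 0} \eps^{1-k}\, \E\bigl[\max\{|\xi_1|,\ldots,|\xi_n|\}\, \ind_{B_{n,k}(\eps)}\bigr] = k\binom{n}{k}\int_{0}^{\infty} s\,(2\Phi(s)-1)^{n-k}(2\varphi(s))^k\,\dd s.
\end{equation}
On the other hand, formula~\eqref{eq:V_k_C_n} reads
\begin{equation}\label{eq:plan_crosspoly_2}
V_k(C^n) = (2\pi)^{k/2}\binom{n}{k+1}\frac{k+1}{k!}\int_{0}^{\infty}(2\Phi(t)-1)^{n-k-1}(2\varphi(t))^{k+1}\,\dd t.
\end{equation}

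The heart of the argument is to transform the integral in~\eqref{eq:plan_crosspoly_2} into the integral in~\eqref{eq:plan_crosspoly_1}. I would write $(2\varphi(t))^{k+1}\,\dd t = (2\varphi(t))^k\,\dd(2\Phi(t)-1)$ and integrate by parts, using that $\tfrac{\dd}{\dd t}(2\Phi(t)-1)^{n-k} = (n-k)(2\Phi(t)-1)^{n-k-1}\cdot 2\varphi(t)$ and $\tfrac{\dd}{\dd t}(2\varphi(t))^k = -kt\,(2\varphi(t))^{k-1}\cdot 2\varphi(t)$, together with the vanishing of the boundary terms (at $t=0$ because $2\Phi(0)-1=0$, and at $t=\infty$ because $\varphi(\infty)=0$). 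This yields
\begin{equation}\label{eq:plan_crosspoly_3}
\int_{0}^{\infty}(2\Phi(t)-1)^{n-k-1}(2\varphi(t))^{k+1}\,\dd t = \frac{k}{n-k}\int_{0}^{\infty} t\,(2\Phi(t)-1)^{n-k}(2\varphi(t))^k\,\dd t.
\end{equation}

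Finally, I would combine~\eqref{eq:plan_crosspoly_2} with~\eqref{eq:plan_crosspoly_3} and simplify the prefactor via the identity $\binom{n}{k+1}\frac{k+1}{n-k} = \binom{n}{k}$, obtaining
\[
V_k(C^n) = \frac{(2\pi)^{k/2}}{k!}\cdot k\binom{n}{k}\int_0^\infty t\,(2\Phi(t)-1)^{n-k}(2\varphi(t))^k\,\dd t,
\]
which matches~\eqref{eq:plan_crosspoly_1} multiplied by $(2\pi)^{k/2}/k!$, as required. There is no real obstacle here; the only place demanding care is the integration by parts in~\eqref{eq:plan_crosspoly_3}, where one must track the factors of $2$ and verify that both endpoint contributions vanish. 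Since Proposition~\ref{prop:conditional_crosspoly} has been established (by the same mechanism as Proposition~\ref{prop:conditional}, replacing $\varphi,\Phi$ by the densities of $|\xi_i|$, namely $2\varphi$ and $2\Phi-1$), the entire argument reduces to the elementary calculus step above.
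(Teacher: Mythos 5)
Your proof is correct and is exactly the argument the paper intends: the paper omits this proof, stating only that it is analogous to Theorem~\ref{theo:intrinsic_interpretation_simplex}, and you have carried out precisely that analogy (Proposition~\ref{prop:conditional_crosspoly} with $f(s)=s$, formula~\eqref{eq:V_k_C_n}, and the integration by parts with $2\Phi-1$ and $2\varphi$ in place of $\Phi$ and $\varphi$). The boundary-term check and the binomial simplification $\binom{n}{k+1}\frac{k+1}{n-k}=\binom{n}{k}$ are both right.
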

We omit the proofs of these three results because they are similar to Propositions~\ref{prop:conditional}, \ref{prop:conditional_limit}  and Theorem~\ref{theo:intrinsic_interpretation_simplex}.

\begin{remark}
Similarly to the above, the external angle of the simplex $S^n_0 = \conv[0,e_1,\ldots,e_n]$ at any $k$-face not containing $0$, see~\eqref{eq:ext_angle_simplex_zero}, can be expressed through the expected maximum of $\xi_1,\ldots,\xi_n$ on the event $A_{n,k}^{(0)} := \{\xi_{(n)} = \xi_{(n-k+1)}>0\}$.
\end{remark}

\subsection{Asymptotic distribution of the multiple maximum}
It is well-known in the theory of extreme values, see~\cite[Theorem~1.5.3 on p.~14]{leadbetter_etal_book} or~\cite[Example 2.3.2 on p.~65]{galambos_book}, that
$$
\lim_{n\to\infty} \P\left[\sqrt{2\log n}\,(\max\{\xi_1,\ldots,\xi_n\} - u_n) \leq x\right] = \eee^{-\eee^{-x}}, \quad x\in\R,
$$
where the normalizing sequence $u_n$ is given by
\begin{equation}\label{eq:def_u_n}
u_n = \sqrt{2\log n}  - \frac{\frac 12 \log \log n +\log (2\sqrt{\pi})}{\sqrt{2\log n}}.
\end{equation}


The next proposition describes the limit distribution for the maximum in a normal sample given that the maximum is attained $k$ times.
\begin{proposition}\label{prop:asympt_distr_mult_max}
Let $k\in\N$ be fixed. The law of $u_n (\xi_{(n)} - u_n)$ given $A_{n,k}$ converges weakly, as $n\to\infty$, to the distribution with the density
\begin{equation}\label{eq:density_k_th_order_stat}
\frac 1 {\Gamma(k)} \eee^{-\eee^{-z}} \eee^{-kz}, \quad z\in\R.
\end{equation}
\end{proposition}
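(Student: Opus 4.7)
The plan is to derive the limiting density by a direct change-of-variables computation starting from Proposition~\ref{prop:conditional_limit}, and then to upgrade pointwise convergence of densities to weak convergence via Scheff\'e's lemma. Interpreting the conditional law of $\xi_{(n)}$ given $A_{n,k}$ as the limit density identified in Proposition~\ref{prop:conditional_limit}, I apply the substitution $t = u_n + z/u_n$ to obtain the density of $Z_n := u_n(\xi_{(n)} - u_n)$:
$$
g_n(z) \;=\; \frac{u_n^{-1}\,\Phi^{n-k}(u_n + z/u_n)\,\varphi^{k}(u_n + z/u_n)}{\int_{-\infty}^{\infty}\Phi^{n-k}(s)\,\varphi^{k}(s)\,\dd s}.
$$

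Next, I would establish pointwise convergence $g_n(z) \to \tfrac{1}{\Gamma(k)}\eee^{-\eee^{-z}}\eee^{-kz}$ using three standard extreme-value estimates. The definition of $u_n$ in~\eqref{eq:def_u_n} combined with Mills' ratio gives $n(1-\Phi(u_n)) \to 1$, equivalently $\varphi(u_n) \sim u_n/n$. The elementary identity $\varphi(u_n + z/u_n) = \varphi(u_n)\exp(-z - z^2/(2u_n^2))$ yields $\varphi(u_n + z/u_n)/\varphi(u_n) \to \eee^{-z}$, and combined with Mills' ratio this gives $n(1-\Phi(u_n + z/u_n)) \to \eee^{-z}$, so that $\Phi^{n-k}(u_n + z/u_n) \to \eee^{-\eee^{-z}}$. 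Consequently the numerator of $g_n(z)$ is asymptotic to $u_n^{-1}\varphi^{k}(u_n)\,\eee^{-\eee^{-z}}\eee^{-kz}$. For the denominator, the same substitution followed by dominated convergence produces
$$
\int_{-\infty}^{\infty}\Phi^{n-k}(s)\,\varphi^{k}(s)\,\dd s \;\sim\; \frac{\varphi^{k}(u_n)}{u_n}\int_{-\infty}^{\infty}\eee^{-\eee^{-z}}\eee^{-kz}\,\dd z \;=\; \frac{\varphi^{k}(u_n)}{u_n}\,\Gamma(k),
$$
the final equality being the substitution $w = \eee^{-z}$ in the definition of the Gamma function. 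Dividing numerator by denominator yields the desired pointwise limit of $g_n$.

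Since the limit $\tfrac{1}{\Gamma(k)}\eee^{-\eee^{-z}}\eee^{-kz}$ is itself a probability density on $\R$, Scheff\'e's lemma upgrades the pointwise convergence of $g_n$ to convergence in total variation, and in particular to weak convergence of the corresponding distributions. The main technical obstacle is producing a dominating function for the dominated-convergence step, valid uniformly in $n$ on all of $\R$: one must simultaneously control $\Phi^{n-k}(u_n+z/u_n)$ and $\varphi^{k}(u_n+z/u_n)/\varphi^{k}(u_n)$. For $z \geq 0$ the bound $\varphi(u_n+z/u_n)/\varphi(u_n) \leq \eee^{-z}$ is immediate and $\Phi^{n-k} \leq 1$ gives an integrable majorant. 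For $z < 0$ one must exploit the fact that $\Phi^{n-k}(u_n + z/u_n)$ decays double-exponentially in $|z|$ (via Mills' ratio again), so that it more than compensates for the growth of $(\varphi(u_n+z/u_n)/\varphi(u_n))^k \sim \eee^{-kz}$; this is the only place where a careful but routine tail estimate is needed.
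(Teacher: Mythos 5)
Your proposal is correct and follows essentially the same route as the paper's own proof: start from the conditional density of Proposition~\ref{prop:conditional_limit}, substitute $t=u_n+z/u_n$, use the extreme-value asymptotics $n(1-\Phi(u_n+z/u_n))\to \eee^{-z}$ and $\varphi(u_n+z/u_n)\sim u_n\eee^{-z}/n$ to get pointwise convergence of the densities and the $\Gamma(k)$ normalization, and conclude by Scheff\'e's lemma. The only difference is that you spell out the dominating function for the denominator's limit interchange, which the paper leaves implicit; that is a welcome addition, not a deviation.
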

\begin{proof}
The density of $\xi_{(n)}$ conditioned on $A_{n,k}$ is given by~\eqref{eq:cond_density_mult_max}.
By Scheff\'e's lemma, it suffices to prove the pointwise convergence of the corresponding densities, that is
\begin{equation}\label{eq:density_k_th_order_stat_need_to_prove}
\lim_{n\to\infty} \frac 1 {u_n} f_{\xi_{(n)}} \left(\left.u_n + \frac z {u_n}\right| A_{n,k}\right) = \frac 1 {\Gamma(k)} \eee^{-\eee^{-z}} \eee^{-kz}.
\end{equation}
It is easy to check that the sequence $u_n$ given by~\eqref{eq:def_u_n} satisfies
$$
1-\Phi(u_n)\simn \frac{1}{\sqrt{2\pi} u_n}\eee^{-u_n^2/2} \simn \frac 1n.
$$
It follows that for all $z\in\R$,
\begin{equation}\label{eq:Phi_asympt}
1-\Phi\left(u_n + \frac z {u_n}\right) \simn \frac{\eee^{-z}}{n},
\quad
\varphi\left(u_n + \frac z {u_n}\right) \simn  \frac{u_n\eee^{-z}}{n}.
\end{equation}
Since $k$ is fixed, we obtain
\begin{equation}\label{eq:asympt_Phi_tech_1}
\Phi^{n-k}\left(u_n + \frac z {u_n}\right) \varphi^k \left(u_n + \frac z {u_n}\right)\simn
\eee^{-\eee^{-z}} \eee^{-kz} \frac{u_n^k}{n^k}.
\end{equation}
Also, writing $s= u_n + \frac{z}{u_n}$ and using~\eqref{eq:asympt_Phi_tech_1}, we obtain
\begin{multline}\label{eq:asympt_Phi_tech_2}
\int_{-\infty}^{\infty} \Phi^{n-k}(s) \varphi^k(s) \dd s = \int_{-\infty}^{\infty} \Phi^{n-k}\left(u_n + \frac z {u_n}\right) \varphi^k\left(u_n + \frac z {u_n}\right) \frac{\dd z}{u_n}
\\\simn  \frac{u_n^{k-1}}{n^k} \int_{-\infty}^{\infty} \eee^{-\eee^{-z}} \eee^{-kz} \dd z
=
\frac{u_n^{k-1}}{n^k} \Gamma(k).
\end{multline}
Taking the quotient of~\eqref{eq:asympt_Phi_tech_1} and~\eqref{eq:asympt_Phi_tech_2}, we arrive at the desired formula~\eqref{eq:density_k_th_order_stat_need_to_prove}.
\end{proof}

Observe that~\eqref{eq:density_k_th_order_stat} is also the limit density of the appropriately normalized $k$-th upper order statistics $\xi_{(n-k+1)}$ (without any conditioning) in the sense that
$$
\lim_{n\to\infty} \P\left[\sqrt{2\log n}\,( \xi_{(n-k+1)} - u_n) \leq x\right] = \frac 1 {\Gamma(k)} \int_{-\infty}^x \eee^{-\eee^{-z}} \eee^{-kz}\dd z= \eee^{-\eee^{-x}}\sum_{j=0}^{k-1}\frac{\eee^{-jx}}{j!}
$$
for all $x\in\R$; see~\cite[Theorem~2.2.2 on p.~33]{leadbetter_etal_book} or~\cite[Example~2.8.1 on p.~105]{galambos_book}.

\begin{remark}
A result similar to Proposition~\ref{prop:asympt_distr_mult_max} holds for the conditional density of $|\xi|_{(n)}$ given $B_{n,k}$, but the centering sequence $u_n$ should be replaced by $u_{2n}$ expressing the fact that $|\xi|_{(n)}$ is a maximum of $2n$ standard Gaussian variables $\pm \xi_1,\ldots,\pm \xi_n$ which are approximately independent.   The limit density remains the same.
\end{remark}

\subsection{Proof of Proposition~\ref{prop:conditional}}\label{subsec:proof_conditional}

The joint density of the order statistics $(\xi_{(1)}, \ldots, \xi_{(n)})$ is $n!\varphi(x_1) \ldots \varphi(x_n) \ind_{\{x_1 < \ldots < x_n\}}$; see~\cite[Eq.~(2.9) on p.~8]{nevzorov_book}, hence we can write
$$
\E [f(\xi_{(n)}) \ind_{A_{n,k}(\eps)}]
=
n! \int_{\substack{x_1<\ldots<x_n\\x_n-x_{n-k+1} < \eps}} f(x_n) \varphi(x_1) \ldots \varphi(x_n) \dd x_1\ldots \dd x_n.
$$
We replace the integration over $x_1,\ldots,x_n$ by the integration over $x_1,\ldots,x_{n-k}, s_1,\ldots,s_{k-1},x_n$, where the new variables $s_1,\ldots, s_{k-1}$ are introduced as follows:
$$
x_{n-1} = x_n - \eps s_1,\quad \ldots, \quad x_{n-k+1} = x_n - \eps s_{k-1}.
$$
Noting that the Jacobian is $\eps^{k-1}$, we can transform the above integral as follows:
\begin{multline}\label{eq:integral_dominated}
\eps^{1-k} \E [f(\xi_{(n)}) \ind_{A_{n,k}(\eps)}]
=
n!  \int_{\R^n} \ind_{\{x_1<\ldots <x_{n-k} < x_n -\eps s_{k-1}\}} \ind_{\{0 < s_1 <\ldots < s_{k-1} <1\}}\cdot
\\ \cdot f(x_n) \varphi(x_n)\dd x_n \, \prod_{i=1}^{n-k} \varphi(x_i) \dd x_i \, \prod_{i=1}^{k-1} \varphi(x_n -\eps s_{i}) \dd s_i.
\end{multline}
By the dominated convergence theorem (whose conditions will be verified below), we have
\begin{multline*}
\lim_{\eps \downarrow 0}\eps^{1-k} \E [f(\xi_{(n)}) \ind_{A_{n,k}(\eps)}]
=
n!  \int_{\R^n}
\ind_{\{x_1<\ldots <x_{n-k} < x_n\}} \ind_{\{0 < s_1 <\ldots < s_{k-1} <1\}} \cdot
\\
\cdot f(x_n) \varphi(x_n)\dd x_n \, \prod_{i=1}^{n-k} \varphi(x_i) \dd x_i \, \prod_{i=1}^{k-1} \varphi(x_n) \dd s_i.
\end{multline*}
Performing the integration over $s_1,\ldots,s_{k-1}$ yields the factor $1/(k-1)!$:
\begin{multline*}
\lim_{\eps \downarrow 0}\eps^{1-k} \E [f(\xi_{(n)}) \ind_{A_{n,k}(\eps)}]
=
\frac{n!}{(k-1)!}  \int_{\R^{n-k} \times\R}
\ind_{\{x_1<\ldots <x_{n-k} < x_n\}}  \cdot
\\
\cdot f(x_n) \varphi^k(x_n)\dd x_n \prod_{i=1}^{n-k} \varphi(x_i) \dd x_i.
\end{multline*}
Keeping $x_n$ fixed and integrating over the variables $x_1,\ldots,x_{n-k}$ yields
$$
\lim_{\eps \downarrow 0}\eps^{1-k} \E [f(\xi_{(n)}) \ind_{A_{n,k}(\eps)}]
=
\frac{n!}{(k-1)!(n-k)!}  \int_{\R}
f(x_n) \varphi^k(x_n)  \Phi^{n-k}(x_n) \dd x_n,
$$
where the factor $1/(n-k)!$ appeared because the variables $x_1,\ldots,x_{n-k}$ were ordered increasingly. Replacing $x_n$ by $s$ yields the required formula.

To justify the use of the dominated convergence theorem, observe that the function under the sign of the integral in~\eqref{eq:integral_dominated} can be estimated from above by
$$
\ind_{\{x_1<\ldots <x_{n-k} < x_n\}} \ind_{\{0 < s_1 <\ldots < s_{k-1} <1\}}  |f(x_n)| \varphi(x_n) \, \prod_{i=1}^{n-k} \varphi(x_i)
$$
because $\varphi(x_n -\eps s_{i})\leq 1$. The  integral of the latter function equals
$$
\frac{1}{(k-1)!(n-k)!} \int_{\R} |f(x_n)| \varphi(x_n) \Phi^{n-k}(x_n)\dd x_n,
$$
which is finite by the assumption $\int_{\R} |f(x_n)| \varphi(x_n)\dd x_n< \infty$.

\section{Asymptotics}\label{sec:asymptotics}
Since the work of Ruben~\cite{ruben,ruben_moments}, Hadwiger~\cite{hadwiger}, Efron~\cite{efron}, Affentranger and Schneider~\cite{AS92} it is known that integrals of the form $\int_{-\infty}^{\infty} \varphi^\alpha(t) \Phi^n(t) \dd t$ appear in the explicit formulae for the volumes of spherical regular simplices, external angles of regular simplices, expected volumes and number of faces of Gaussian polytopes. See also~\cite{kabluchko_zaporozhets_gauss_polytope} for further examples.
The following asymptotic equivalence  was derived in~\cite[pp.~44-45]{raynaud} and~\cite[Lemma~5]{vershik_sporyshev_asymptotic}:
$$
\int_{-\infty}^{\infty} \varphi^\alpha(t) \Phi^n(t) \dd t \sim n^{-\alpha} (2\log n)^{(\alpha-1)/2} \Gamma(\alpha), \quad n\to\infty.
$$
We shall prove a more precise result and then use it to deduce Theorem~\ref{theo:asympt_E_Vol}.
\begin{theorem}\label{theo:asympt_integral}
Let $u_n$ be given by~\eqref{eq:def_u_n}. If $n\to\infty$ while $\alpha>0$ stays fixed, we have
\begin{align}
\int_{-\infty}^{\infty} \varphi^\alpha(t) \Phi^n(t) \dd t
&=
 \frac{u_n^{\alpha-1} \Gamma(\alpha) + u_n^{\alpha-3}(\alpha-1) (\Gamma(\alpha) - \Gamma'(\alpha))  + o(u_n^{\alpha-3})}{n^\alpha},\label{eq:int_varphi_Phi_asympt1}\\
\int_{0}^{\infty} \varphi^{\alpha}(t) (2\Phi(t)-1)^{n} \dd t
&=
\frac {u_{2n}^{\alpha-1} \Gamma(\alpha) + u_{2n}^{\alpha-3}(\alpha-1) (\Gamma(\alpha) - \Gamma'(\alpha))  + o(u_{2n}^{\alpha-3})} {(2n)^{\alpha}}. \label{eq:int_varphi_Phi_asympt2}
\end{align}
\end{theorem}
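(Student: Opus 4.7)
I will prove~\eqref{eq:int_varphi_Phi_asympt1} by a Laplace-type expansion centered at $t = u_n$; the symmetric case~\eqref{eq:int_varphi_Phi_asympt2} will follow identically after replacing $n$ by $2n$ (since $(2\Phi(t)-1)^n = \exp(-2n(1-\Phi(t))+O(n(1-\Phi(t))^2))$ and the integrand on $(0,\infty)$ concentrates near $t=u_{2n}\gg 0$). Substitute $t = u_n + z/u_n$, $dt = dz/u_n$, and set $c_n := n\varphi(u_n)/u_n$. From the explicit form~\eqref{eq:def_u_n} one checks $c_n\to 1$ (only at rate $O(\log\log n/\log n)$, but only $\log c_n = o(1)$ will be needed). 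Combining the Mills-ratio expansion $1-\Phi(t) = \varphi(t)(t^{-1}-t^{-3}+O(t^{-5}))$ with $\varphi(u_n+z/u_n) = \varphi(u_n)e^{-z-z^2/(2u_n^2)}$ yields, uniformly on compacts in $z$,
\[
n(1-\Phi(u_n+z/u_n)) \;=\; c_n e^{-z}\Bigl[1-\tfrac{z^2/2+z+1}{u_n^2}+O(u_n^{-4})\Bigr].
\]

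Since $n(1-\Phi(u_n+z/u_n))^2 = O(1/n) = o(u_n^{-2})$, the expansion $\log\Phi = -(1-\Phi)+O((1-\Phi)^2)$ gives
\[
\Phi^n(u_n+z/u_n) = e^{-c_ne^{-z}}\Bigl[1+\tfrac{c_ne^{-z}(z^2/2+z+1)}{u_n^2}+O(u_n^{-4})\Bigr],
\]
and $\varphi^\alpha(u_n+z/u_n) = \varphi^\alpha(u_n)e^{-\alpha z}[1-\alpha z^2/(2u_n^2)+O(u_n^{-4})]$. Multiplying, dividing by $u_n$, and integrating in $z$, the substitution $w := c_n e^{-z}$ reduces every integral to the Gamma form $\int_0^\infty w^{\alpha-1}e^{-w}(\log w)^k\,dw = \Gamma^{(k)}(\alpha)$. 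A short calculation using $\Gamma(\alpha+1)=\alpha\Gamma(\alpha)$ (which in particular kills the coefficient of $(\log c_n)^2$) shows that the leading integral equals $c_n^{-\alpha}\Gamma(\alpha)$ and the $u_n^{-2}$-correction integral equals $c_n^{-\alpha}\bigl[(\alpha-1)(\Gamma(\alpha)-\Gamma'(\alpha))+(\alpha-1)\Gamma(\alpha)\log c_n\bigr]$.

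The decisive algebraic cancellation is that by definition of $c_n$ one has $\varphi^\alpha(u_n)c_n^{-\alpha}/u_n = u_n^{\alpha-1}/n^\alpha$ exactly, so $c_n$ disappears from both the main term $u_n^{\alpha-1}\Gamma(\alpha)/n^\alpha$ and from the coefficient $(\alpha-1)(\Gamma(\alpha)-\Gamma'(\alpha))$ of $u_n^{\alpha-3}/n^\alpha$. The residual $(\alpha-1)\Gamma(\alpha)\log c_n \cdot u_n^{\alpha-3}/n^\alpha$ is absorbed into the $o(u_n^{\alpha-3})$ error since $\log c_n = o(1)$. This is precisely why the statement can be formulated cleanly in $u_n$ without resolving the slow convergence $c_n\to 1$.

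The main obstacle will be to justify termwise integration via dominated convergence. On $|z|\leq R$ the pointwise expansion is uniform. For $z>R$ one uses $\Phi^n\leq 1$ together with the exponential decay of $e^{-\alpha z}$. For $z<-R$ one shows that $n(1-\Phi(u_n+z/u_n))\geq \tfrac12 e^{-z}$ for $|z|$ and $n$ large, producing a doubly-exponential bound $\Phi^n(u_n+z/u_n)\leq \exp(-\tfrac12 e^{|z|})$ that dominates any polynomial in $z$ from the correction terms. Taking $R$ large, the contributions from $|z|>R$ are $o(u_n^{\alpha-3})$. The constraint $t>0$ in~\eqref{eq:int_varphi_Phi_asympt2} is asymptotically irrelevant because the effective support lies in a bounded neighborhood of $u_{2n}$.
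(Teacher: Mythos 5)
Your proof is correct and follows essentially the same route as the paper's: a Laplace-type expansion at height $\sqrt{2\log n}$, the Mills-ratio expansion of $1-\Phi$, and reduction to the integrals $\int_0^\infty w^{\alpha-1}e^{-w}(\log w)^k\,\dd w=\Gamma^{(k)}(\alpha)$; the only (cosmetic) difference is that the paper recenters at the exact root $v_n$ of $\varphi(v)/v=1/n$ and then shows $v_n^{\alpha-1}=u_n^{\alpha-1}+o(u_n^{\alpha-3})$, whereas you keep $u_n$ and carry the constant $c_n=n\varphi(u_n)/u_n$ through, verifying that it cancels from the stated coefficients and contributes only $o(u_n^{\alpha-3})$ via $\log c_n=o(1)$ --- both are fine, and your Gamma-function bookkeeping checks out. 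One small repair in the domination step (which the paper omits entirely as ``standard''): the inequality $n(1-\Phi(u_n+z/u_n))\ge\tfrac12 e^{-z}$ cannot hold for all large $|z|$ with $z<0$, since for $z\lesssim -u_n^2$ the point $t=u_n+z/u_n$ becomes negative and the left side is at most $n$ while the right side exceeds any power of $n$; you should split off the range $z\le -u_n^2/2$, where the crude bound $\Phi^n(t)\le\Phi^n(u_n/2)=\exp(-n^{3/4+o(1)})$ already makes the contribution negligible, and use your doubly-exponential bound (with $e^{-z}$ weakened to, say, $e^{|z|/2}$ to absorb the factor $e^{-z^2/(2u_n^2)}$) only on $-u_n^2/2\le z\le -R$.
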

\begin{proof}
We prove~\eqref{eq:int_varphi_Phi_asympt1} since the proof of~\eqref{eq:int_varphi_Phi_asympt2} is similar. Let $v_n$ be the solution of 
$$
\frac 1 {\sqrt {2\pi} v_n} \eee^{-v_n^2/2} = \frac 1n.
$$
Then, we have $v_n = u_n + o(\frac 1{\sqrt {2\log n}})$ and since $u_n\sim v_n\sim \sqrt{2\log n}$, we obtain
$$
v_{n}^{\alpha-1} = u_n^{\alpha-1} + o(u_n^{\alpha-3}), \quad  v_{n}^{\alpha-3} = u_n^{\alpha-3} + o(u_n^{\alpha-3}),
\quad n\to\infty.
$$
Hence, we can replace $u_n$ by $v_n$ in~\eqref{eq:int_varphi_Phi_asympt1}, and it suffices to prove the asymptotics
$$
\int_{-\infty}^{\infty} \varphi^\alpha(t) \Phi^n(t) \dd t = n^{-\alpha} \left(v_n^{\alpha-1} \Gamma(\alpha) + v_n^{\alpha-3}(\alpha-1) (\Gamma(\alpha) - \Gamma'(\alpha))  + o(v_n^{\alpha-3})\right),
$$
as $n\to\infty$.  Using the change of variables  $t= v_n + z/v_n$ and recalling that $\varphi(t) = \frac 1 {\sqrt{2\pi}}\eee^{-t^2/2}$, we can write
$$
\int_{-\infty}^{\infty} \varphi^\alpha(t) \Phi^n(t) \dd t
=
n^{-\alpha} v_n^{\alpha-1} \int_{-\infty}^{\infty} \eee^{-\alpha z} \eee^{-\frac{\alpha z^2}{2 v_n^2}}
\eee^{n \log \Phi\left(v_n +\frac z {v_n}\right)} \dd z.
$$
Next we use the following asymptotic expansion of the standard normal tail function
$$
1 - \Phi(t) = \frac 1 {\sqrt {2\pi} t } \eee^{-t^2/2} \left(1-\frac 1 {t^2} + O\left(\frac 1 {t^4}\right)\right),
\quad t\to +\infty.
$$
Inserting $t= v_n + z /v_n$, we arrive at
\begin{align}
1 - \Phi\left(v_n + \frac z {v_n}\right)
&=
\frac 1 {\sqrt {2\pi} \left(v_n + \frac z {v_n}\right) } \eee^{-v_n^2/2} \eee^{-z} \eee^{-\frac {z^2}{2v_n^2}} \left(1-\frac 1{v_n^2} + O\left(\frac 1 {v_n^4}\right)\right)\notag\\
&=\frac {\eee^{-z}} {n} \left(1- \frac {z^2 + 2z + 2}{2v_n^2} + O\left(\frac 1 {v_n^4}\right)\right),
\quad n\to\infty. \label{eq:Phi_bar_asympt}
\end{align}
Using the Taylor expansion  $\log x = x-1 + O((x-1)^2)$  as $x\to 1$, we get
\begin{equation}\label{eq:n_log_Phi}
n \log \Phi\left(v_n +\frac z {v_n}\right) = -\eee^{-z} \left(1- \frac {z^2 + 2z + 2}{2v_n^2}\right) + O\left(\frac 1 {v_n^4}\right).
\end{equation}
Taking everything together and using the Taylor expansion of the exponential, we arrive at
\begin{multline*}
\int_{-\infty}^{\infty} \varphi^\alpha(t) \Phi^n(t) \dd t
\\=
n^{-\alpha} v_n^{\alpha-1} \int_{-\infty}^{\infty} \eee^{-\alpha z} \eee^{-\eee^{-z}}
\left(1 -\frac{\alpha z^2}{2 v_n^2} + \eee^{-z}\left(\frac {z^2 + 2z + 2}{2v_n^2}\right) + O\left(\frac 1 {v_n^4}\right)\right) \dd z.
\end{multline*}
It remains to use the integral
$$
\int_{-\infty}^{\infty} z^k  \eee^{-\eee^{-z}} \eee^{-\alpha z} \dd z = (-1)^k\Gamma^{(k)}(\alpha),
$$
for $k=0,1,2$, to get
\begin{multline*}
\int_{-\infty}^{\infty} \varphi^\alpha(t) \Phi^n(t) \dd t
\\=
n^{-\alpha} v_n^{\alpha-1} \left(\Gamma(\alpha) + \frac 1 {2v_n^2} \left(-\alpha \Gamma''(\alpha) + \Gamma''(\alpha+1) - 2 \Gamma'(\alpha+1) + 2\Gamma(\alpha+1)\right) + O\left(\frac 1 {v_n^4}\right) \right).
\end{multline*}
Using the relations $\Gamma(\alpha+1) = \alpha \Gamma(\alpha)$, $\Gamma'(\alpha+1) = \Gamma(\alpha) + \alpha \Gamma'(\alpha)$ and $\Gamma''(\alpha+1) = 2\Gamma'(\alpha)+ \alpha \Gamma''(\alpha)$, we obtain the required asymptotics. We omitted the justification of the interchanging the integral and the limit because it is standard.  
\end{proof}

\begin{proof}[Proof of Theorem~\ref{theo:asympt_E_Vol}]
Recall that $\E \Vol_d (\cP_{n,d})$ and $\E \Vol_d (\cP_{n,d}^\pm)$ are given by~\eqref{eq:exp_vol_P_n_d} and~\eqref{eq:exp_vol_P_n_d_symm}, respectively. Apply Theorem~\ref{theo:asympt_integral} with $\alpha=d+1$ to the integrals appearing in these formulae. For example,
$$
\E \Vol_d (\cP_{n,d}) = \frac {\kappa_d}{d!} \frac{n!\, n^{-d-1}}{(n-d-1)!} \left(u_{n-d-1}^d d! + d u_{n-d-1}^{d-2}(\Gamma(d+1) - \Gamma'(d+1)) + o(u_{n-d-1}^{d-2})\right).
$$
Notice that 
$$
\Gamma(d+1) - \Gamma'(d+1) = d!\left(\gamma - \sum_{j=2}^d \frac 1j\right).
$$
Further, observe that $u_{n-d-1} = u_n + o(u_n^{-1})$ implies that
$$
u_{n-d-1}^{d} = u_n^d + o(u_n^{d-2}), \quad u_{n-d-1}^{d-2} = u_n^{d-2} + o(u_n^{d-4}).
$$
Also, $\frac {n!}{(n-d-1)!} = n^{d+1}(1+O(\frac 1n))$. 
Taking everything together, we obtain the required formula~\eqref{eq:E_Vol_asympt_simplex}. 
\end{proof}

Finally, let us state a ``poissonized'' version of Theorem~\ref{theo:asympt_integral}.
\begin{theorem}\label{theo:asympt_integral_poi}
Let $u_\lambda$ be given by~\eqref{eq:def_u_n} with $n$ replaced by $\lambda$. If $\alpha>0$ is fixed and $\lambda\to\infty$, then
\begin{align}
\int_{-\infty}^{\infty} \varphi^\alpha(t) \eee^{\lambda (\Phi(t)-1)} \dd t
&=
\frac {1}{\lambda^{\alpha}} \left(u_\lambda^{\alpha-1} \Gamma(\alpha) + u_\lambda^{\alpha-3}(\alpha-1) (\Gamma(\alpha) - \Gamma'(\alpha))  + o(u_\lambda^{\alpha-3})\right).\label{eq:int_varphi_Phi_asympt1_poi}
\end{align}
\end{theorem}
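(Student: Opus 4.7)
The plan is to mirror the proof of Theorem~\ref{theo:asympt_integral} with only minor modifications; essentially, the substitution $\Phi^n(t) \mapsto \eee^{\lambda(\Phi(t)-1)}$ turns out to make the expansion slightly cleaner rather than harder, since we no longer need to Taylor-expand $\log \Phi$.

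First, I would introduce an auxiliary scale $v_\lambda$ defined as the solution of
$$
\frac{1}{\sqrt{2\pi}\,v_\lambda}\eee^{-v_\lambda^2/2} = \frac{1}{\lambda},
$$
and observe, exactly as in the proof of Theorem~\ref{theo:asympt_integral}, that $v_\lambda = u_\lambda + o(1/\sqrt{2\log\lambda})$, whence $v_\lambda^{\alpha-1} = u_\lambda^{\alpha-1}+o(u_\lambda^{\alpha-3})$ and $v_\lambda^{\alpha-3} = u_\lambda^{\alpha-3}+o(u_\lambda^{\alpha-3})$. Thus it suffices to prove~\eqref{eq:int_varphi_Phi_asympt1_poi} with $u_\lambda$ replaced by $v_\lambda$.

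Next, I would perform the change of variables $t = v_\lambda + z/v_\lambda$, which yields
$$
\int_{-\infty}^{\infty} \varphi^\alpha(t)\,\eee^{\lambda(\Phi(t)-1)}\,\dd t = \lambda^{-\alpha} v_\lambda^{\alpha-1} \int_{-\infty}^{\infty} \eee^{-\alpha z} \eee^{-\frac{\alpha z^2}{2 v_\lambda^2}} \eee^{-\lambda(1-\Phi(v_\lambda + z/v_\lambda))}\,\dd z.
$$
Now the key point: in the proof of Theorem~\ref{theo:asympt_integral}, one had to pass through the identity $n\log\Phi(\cdot)=-n(1-\Phi(\cdot))+O((1-\Phi(\cdot))^2)$. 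In the Poissonized version this step disappears entirely; we can invoke the asymptotic expansion~\eqref{eq:Phi_bar_asympt} directly to get
$$
\lambda\bigl(1-\Phi(v_\lambda + z/v_\lambda)\bigr) = \eee^{-z}\left(1 - \frac{z^2+2z+2}{2 v_\lambda^2}\right) + O\!\left(\frac{1}{v_\lambda^4}\right),
$$
which is exactly the same as~\eqref{eq:n_log_Phi} with $n$ replaced by $\lambda$.

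From here the remainder of the argument is a verbatim copy of the proof of Theorem~\ref{theo:asympt_integral}: expand $\eee^{-\lambda(1-\Phi(\ldots))}$ around $\eee^{-\eee^{-z}}$ using the Taylor series of the exponential, combine with the $\eee^{-\alpha z^2/(2v_\lambda^2)}$ factor, and integrate term by term using the Gumbel--Gamma moment identity $\int_{-\infty}^{\infty} z^k \eee^{-\eee^{-z}} \eee^{-\alpha z}\dd z = (-1)^k \Gamma^{(k)}(\alpha)$ for $k=0,1,2$. The same final simplification via $\Gamma(\alpha+1)=\alpha\Gamma(\alpha)$, $\Gamma'(\alpha+1) = \Gamma(\alpha) + \alpha\Gamma'(\alpha)$, and $\Gamma''(\alpha+1) = 2\Gamma'(\alpha) + \alpha\Gamma''(\alpha)$ produces the stated coefficient $(\alpha-1)(\Gamma(\alpha)-\Gamma'(\alpha))$. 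The only step requiring care is the justification of passing the limit under the integral sign, but a uniform integrable majorant of the form $C \eee^{-\alpha z} \eee^{-c\eee^{-z}}$ (for appropriate constants and for $|z| \leq v_\lambda^2$, with an exponentially small tail outside) works in exactly the same way as in Theorem~\ref{theo:asympt_integral}; I expect this to be the only mildly technical step but no real obstacle.
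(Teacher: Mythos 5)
Your proposal is correct and follows exactly the route of the paper's own proof: the paper likewise substitutes $v_\lambda$ for $v_n$, notes that the factor $\eee^{n\log\Phi(v_n+z/v_n)}$ is simply replaced by $\eee^{\lambda\Phi(v_\lambda+z/v_\lambda)-\lambda}$, invokes~\eqref{eq:Phi_bar_asympt} with $n$ replaced by $\lambda$ in place of~\eqref{eq:n_log_Phi}, and declares the rest of the argument identical. Your observation that the Poissonized case is slightly cleaner because the $\log\Phi$ expansion is no longer needed is exactly the simplification the paper exploits.
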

\begin{proof}
One can prove~\eqref{eq:int_varphi_Phi_asympt1_poi} in the same way as~\eqref{eq:int_varphi_Phi_asympt1}, but instead of the term $\eee^{n\log \Phi (v_n + \frac z {v_n})}$ we now have $\eee^{\lambda \Phi(v_\lambda + \frac {z}{v_\lambda}) - \lambda}$. Using~\eqref{eq:Phi_bar_asympt} with $n$ replaced by $\lambda$, we can write
$$
\lambda \Phi\left(v_\lambda + \frac {z}{v_\lambda}\right) - \lambda = - \eee^{-z} \left(1- \frac {z^2 + 2z + 2}{2v_\lambda^2} + O\left(\frac 1 {v_\lambda^4}\right)\right),
\quad \lambda\to\infty.
$$
This replaces~\eqref{eq:n_log_Phi}. The rest of the proof is the same.
\end{proof}

\begin{remark}\label{rem:same_asymptotics}
It is easy to see that the asymptotics in~\eqref{eq:int_varphi_Phi_asympt1} and~\eqref{eq:int_varphi_Phi_asympt1_poi} do not change if we replace $\int_{-\infty}^{\infty}$ by $\int_C^{\infty}$ (for any constant $C$) since the main contribution to the integral  comes from the neighborhood of the point $v_n$, respectively $v_\lambda$.
\end{remark}

Starting with the formulae~\eqref{eq:exp_vol_P_n_d_poi} and~\eqref{eq:exp_vol_P_n_d_symm_poi} and applying Theorem~\ref{theo:asympt_integral_poi} one easily gets~\eqref{eq:E_Vol_asympt_simplex_poi} and~\eqref{eq:E_Vol_asympt_cross_poi} with the same arguments as in the above proof of Theorem~\ref{theo:asympt_E_Vol}.

\bibliographystyle{plainnat}
\bibliography{intrinsic_volumes_bib}

\end{document}